\newcommand{\eps}{\epsilon}
\newcommand{\diff}{{\rm d}}
\newcommand{\mmp}{\mathbb{P}}
\newcommand{\me}{\mathbb{E}}
\newcommand{\mr}{\mathbb{R}}
\newcommand{\mn}{\mathbb{N}}
\newcommand{\lin}{\underset{n\to\infty}{\lim}}
\newtheorem{thm}{Theorem}[section]
\newtheorem{proposition}[thm]{Proposition}
\newtheorem{lemma}[thm]{Lemma}
\newtheorem{assertion}[thm]{Proposition}
\theoremstyle{definition}
\theoremstyle{remark}
\begin{document}
\title{Lambda-coalescents with  dust component}
\date{\today}

\author{
Alexander Gnedin\thanks{
Utrecht University,
e-mail: A.V.Gnedin@uu.nl},
\quad
Alexander Iksanov\thanks{
National T. Shevchenko University of Kiev,
e-mail: iksan@unicyb.kiev.ua}
\quad and\quad
Alexander
Marynych\thanks{
National T. Shevchenko
University of Kiev,
e-mail:
marynych@unicyb.kiev.ua} }

\maketitle
\begin{abstract}
\noindent We consider the lambda-coalescent processes with positive frequency
of singleton clusters.
The class in focus
covers, for instance, the beta$(a,b)$-coalescents with $a>1$.
We show that some large-sample properties of these processes can be derived by
coupling the coalescent with an
increasing
L{\'e}vy process (subordinator), and by
exploiting parallels with the theory of regenerative composition structures.
In particular, we discuss the limit distributions of the absorption time and the number of collisions.
\end{abstract}
\noindent Keywords: absorption time, coupling, lambda-coalescent,
number of collisions, regenerative composition structure,
subordinator.

\section{Introduction} \label{intro}

The lambda-coalescent  with values in partitions of $n$ integers
is a Markovian process
$\Pi_n=(\Pi_n(t))_{t\geq 0}$
which starts at $t=0$  with $n$
singletons and evolves according to the rule: for each $t\geq 0$
when the number of clusters is $m$, each $k$ tuple of them is
merging in one cluster at probability rate
\begin{equation}\label{lrates}
\lambda_{m,k}=\int_{0}^1 x^{k}(1-x)^{m-k}\nu({\rm d}x),
\qquad 2\leq k\leq m,
\end{equation}
where $\nu$ is a measure on the unit interval with
finite second moment.
The integral representation of rates \eqref{lrates} ensures that
the processes $\Pi_n$ can be defined consistently for all $n$, as
restrictions of a coalescent process $\Pi_\infty$
 which starts
with infinitely many clusters
and assumes values in the set of partitions of $\mn$, see \cite{Pit}.
The infinite coalescent
$\Pi_\infty$ may be regarded as a limiting form of $\Pi_n$ as
$n\to\infty$, and uniquely connected to a process with values in the
infinite-dimensional space of partitions of a unit mass.
The lambda-coalescents were introduced in the papers by Pitman \cite{Pit} and Sagitov \cite{Sag}, where the parameterization
by finite measure
$\Lambda ({\rm d}x)=x^2\nu({\rm d}x)$ was used.
The reader is referred to the recent lecture notes \cite{BerestyckiLec, BertoinLec}
for accessible introduction in the theory of  lambda-coalescents and a survey.

\par  After some number of collisions (merging events)
$\Pi_n$  enters the absorbing state with a sole cluster. Two basic
characteristics of the speed of the coalescence are {\it the
absorption time} $\tau_n$ and {\it the number of collisions}
$X_n$. The large-$n$ properties of $\tau_n$ and $X_n$ are strongly
determined by  the concentration of  measure $\nu$ on the unit interval near the endpoints of $[0,1]$.

We
suppose that $\nu$ has no mass at $1$, which excludes forced termination of $\Pi_\infty$  at independent exponential time.
The coalescent is said to come down from infinity if $\Pi_\infty(t)$ has finitely many clusters
for each $t>0$ almost surely; then  $\tau_n$ converge to a finite random variable $\tau_\infty$ which is the absorption time of $\Pi_\infty$.
Otherwise, $\Pi_\infty(t)$ almost surely stays with infinitely many clusters  for all $t$.
There is a delicate
criterion in terms of the rates $\lambda_{m,k}$ to distinguish between the two alternatives
\cite{Sch}.

In this paper we shall study $\tau_n$ and $X_n$ under the  assumption that $\Pi_\infty$ stays
infinite due to infinitely many original clusters which do not
engage in collisions before any given time $t>0$.
This family of lambda-coalescents can be characterized by the
moment condition
\begin{equation}\label{fin-nu}
\int_0^1 x\,\nu(\diff x)<\infty.
\end{equation}
We call the collection of
singleton clusters of $\Pi_\infty(t)$  {\it the dust
component}.
The dust component has
a positive total frequency, meaning that  the number of
singletons within $\Pi_n(t)$ grows approximately linearly in $n$
as $n\to\infty$.

The coalescents with dust component do not exhaust all coalescents which stay infinite.
One distinguished example is the Bolthausen-Sznitman coalescent with $\nu({\rm d}x)=x^{-2}{\rm d}x$ which stays infinite
although (\ref{fin-nu}) fails. Such examples on the border between `coming down from infinity' and `possessing dust component'
are more of an exception if one considers e.g. measures $\nu$ satisfying a condition of regular variation near zero.

Under (\ref{fin-nu}) every transition of $\Pi_\infty$ will involve infinitely many singletons. This suggests that most of the
collision events of $\Pi_n$ will
involve some of the original $n$ clusters, for large $n$. Another way to express this idea is to say
that in a tree representing the complete merging history
of $\Pi_n$, most of the internal nodes are linked
directly to one of $n$ leaves.
We will show that this intuition is indeed correct, to the extent that the behaviour of $\tau_n$ and $X_n$ can be derived from
that of analogous quantities associated with the evolution of the dust component.
In turn,
the total frequency of the dust component of
$\Pi_\infty$  undergoes a relatively simple  process, which may be   represented as $\exp(-S_t)$
where $S=(S_t)_{t\geq 0}$ is a subordinator.
Similarly for  $\Pi_n$,
the engagement of original $n$ clusters  in their first collisions
follows a Markovian process which has been studied in the context of regenerative composition structures
derived from subordinators \cite{GnePit}.
A coupling of
$\Pi_\infty$ with $S$ will enable us
to apply known results about the level-passage for subordinators, and about the asymptotics of
 regenerative composition structures.

The connection between $\Pi_\infty$ and $S$ was first explored in \cite{GneIksMoe} in the special case  when $\nu$ is a finite measure,
hence subordinator $S$ is a compound Poisson process.
While in the present paper we are mainly interested in infinite $\nu$,
the case of finite $\nu$ is not excluded. Moreover, we will be able to extend the results of \cite{GneIksMoe}
by removing a condition on $\nu$
imposed in that paper.

In a recent paper by Haas and Miermont \cite{haas}
results on counting collisions in the coalescent and counting blocks in the regenerative composition
were  derived  separately in the context of absorption times of decreasing Markov chains.
Our approach adds some insight to the connection between these two models, and  it entails some delicate features like differentiating
between collisions which involve some original clusters of $\Pi_n$ and the collisions which do not.

\par The possible modes of
behaviour of $\tau_n$ and $X_n$ for large $n$  are best
illustrated by the family of coalescents driven by a beta measure
\begin{equation}\label{be}
\nu(\diff x )= c x^{a-3} (1-x)^{b-1}\diff x,
~~~~~a,b,c>0.
\end{equation}
These coalescents come down from infinity for $a<1$ and stay
infinite for $a\geq 1$. With the account of  results of the present
paper, we have the following list.

\begin{itemize}

\item[(i)] Case $0<a<1$.
The limit law of $(X_n-(1-a)n)/n^{1/(2-a)}$ is $(2-a)$-stable (see \cite{GneYak}, and \cite{IksMoe2} for the case $b=1$).
 The distribution of $\tau_\infty$ is unknown.

\item[(ii)] Case $a=1$. The instance
 $b=1$ is the Bolthausen-Sznitman coalescent, for which
 the limit distribution of $\tau_n-\log \log n$ is standard Gumbel
\cite{FreMoe, gold}, while $X_n$, suitably scaled and centered,
converges weakly to a $1$-stable distribution \cite{DIMR2,IksMoe}.
The case $b\neq 1$ remains open.

\item[(iii)] Case $1<a< 2$.
In the sequel we show that $(\tau_n-c\log n)/(\log n)^{1/2}$ (with suitable $c>0$) converges weakly to a normal distribution,
and that $X_n/n^{2-a}$ converges to the exponential functional of a subordinator.
The result about $X_n$ was proved previously in  \cite{haas}, and in
\cite{IksMoe2} in  the case $b=1$.

\item[(iv)] Case $a\geq 2$. In the present paper we prove that
normal limits hold for both $\tau_n$ and $X_n$ with explicitly
determined scaling and centering. In the case $a>2$ these
asymptotics were previously shown in \cite{GneIksMoe}. In the case
$a=2$ the result for $X_n$ was derived in \cite{IksMarMoe}.
\end{itemize}

\section{The coalescent and singleton clusters}

In the role of the state space of the coalescent $\Pi_n$ with initially $n$ clusters
we take the set  of  partitions of  $[n]:=\{1,\dots,n\}$, in which every singleton cluster is classified as either
{\it primary} or {\it secondary}. Under the {\it dust component of}  $\Pi_n(t)$ we shall understand the collection of primary
clusters.
Every
nonsingleton cluster of $\Pi_n(t)$ is regarded as secondary.
For the notational convenience the clusters are written by increase of their minimal elements,  the elements within the clusters are written in
increasing order, and the secondary clusters are written in brackets.
For instance,  $1~ (2)~ (3~ 5~ 6)~  4~ 7$, a partition of the set $[7]$, has three primary clusters and two secondary:
$1,4,7$ and  $(2), (3~5~6)$, respectively.

Introduce $\lambda_{m,1}$ as in \eqref{lrates} with $k=1$. We have
$\lambda_{m,1}<\infty$ by assumption \eqref{fin-nu}.

We define the lambda-coalescent $\Pi_n$ as a c{\'a}dl{\'a}g Markov
process with values in such partitions of $[n]$ and the initial
state $1~2~\cdots~n$ with $n$ primary clusters. Each admissible
transition is either merging some clusters in one cluster, or
turning a primary
singleton cluster into secondary.
From partition with $m$ clusters, the transition rate
 for merging each particular $k$-tuple of $m$ clusters in one is $\lambda_{m,k}$ ($2\leq k\leq m$),
and the transition rate for turning  each particular primary
singleton cluster into secondary singleton cluster is $\lambda_{m,1}$. For instance,
the sequence of distinct states visited by $\Pi_7$  could be
\begin{multline}\nonumber
1~   2~   3~   4~   5~   6~ 7  \to   1~ 2~ (3~ 5~ 6)~  4~ 7
\to   1~ (2)~ (3~ 5~ 6)~  4~ 7
\to
\\
1 ~(2~ 4)~ (3~ 5~ 6)~ 7 \to
1 ~ (2~3~4~5~6~7) \to (1~2~3~4~5~6~7).
\end{multline}

Let $N_n(t)$ be the number of clusters in $\Pi_n(t)$. Then  $N_n=(N_n(t))_{t\geq 0}$ is a nonincreasing Markov process,
with the transition
rate
\begin{equation}\label{phirate}
\varphi_{m,k}:={m\choose k}\lambda_{m,k}
\end{equation}
for jumping from $m$ to $m-k+1$, for $2\leq k\leq m$. Turning a
primary
singleton cluster into a secondary
singleton cluster does not cause a jump of $N_n$. The absorption
time of $\Pi_n$ can be recast as $\tau_n=\inf\{t: N_n(t)=1\}$, and
the number of collisions $X_n$ is equal to the number of jumps the
process $N_n$ needs to proceed from $n$ to $1$ (which is 4 in the
above example where the second transition  does not alter the
number of clusters).

Removing element $n$ transforms partition of $[n]$ into partition
of $[n-1]$. For example, partitions $1~(2~4)~(3)$, $1~(2)~3~4$ and
$1~(2)~3~(4)$   all become $1~(2)~(3)$. Restricting in this way
$\Pi_n$ to $[n-1]$, pointwise in $t\geq 0$, yields a stochastic
copy of $\Pi_{n-1}$. This follows as in \cite{Pit} since the rates
satisfy the recursion
$\lambda_{m,k}=\lambda_{m+1,k}+\lambda_{m+1,k+1}$ for all $1\leq
k\leq m$. Therefore we may define $\Pi_n$ on the same probability
space consistently for all $n$. Explicit realization will appear in the sequel.

The projective limit of the processes $\Pi_n,~ n\in {\mathbb N},$ is a Markov process $\Pi_\infty$
starting at $t=0$ with the infinite configuration of primary clusters $1~2~\cdots$, and assuming
values in the space of partitions of the infinite set ${\mathbb N}$.
Each partition $\Pi_\infty(t)$ has only primary singletons, namely those original clusters
which do not engage in collisions up to time $t$.
For a generic singleton, e.g. labelled $1$, the time before its first collision
has exponential distribution with parameter $\lambda_{1,1}$, and when such collision occurs  infinitely many other
clusters are engaged.

The differentiation of singletons of $\Pi_n(t)$ into  primary and secondary   becomes transparent
by considering $\Pi_n$ as restriction of $\Pi_\infty$ on $[n]$.
The secondary  singletons of $\Pi_n(t)$
 are the unique representatives in $[n]$
of some infinite clusters of $\Pi_\infty(t)$.
The primary singletons of $\Pi_n(t)$ are also singletons in the partition $\Pi_\infty(t)$.


There is a construction of $\Pi_\infty$ based on a planar Poisson point process in the strip $[0,1]\times [0,\infty)$ with
intensity measure $\nu(\diff x)\times \diff t$, see \cite{BerestyckiLec, BertoinLec, Pit}.
 With each atom $(t,x)$
one associates a transition of $\Pi_\infty$ performed by tossing
a coin with probability $x$ for heads.
To pass from  $\Pi_\infty(t-)$ to $\Pi_\infty(t)$, the coin is tossed for each cluster of $\Pi_\infty(t-)$, then those clusters marked
heads are merged in one, while the clusters
marked tails remain unaltered. Although 
there are infinitely many transitions within any time interval 
if $\nu$ is an infinite measure, condition (\ref{fin-nu}) 
ensures that $\Pi_\infty$ does not terminate. In the case of finite $\nu$ transitions
of $\Pi_\infty$ occur at the epochs of Poisson process with rate $\nu([0,1])$.

Let $N_n^*(t)$ be the number of primary clusters in $\Pi_n(t)$. By
homogeneity properties of $\Pi_n$, the process
$N_n^*=(N_n^*(t))_{t\geq 0}$ is  a nonincreasing Markov process,
jumping at rate $\varphi_{m,k}$ from $m$ to $m-k$ for $1\leq k\leq
m$. Let 
$$\tau^*_n:=\inf\{t: N_n^*(t)=0\}$$
 be the random time
when the last of $n$ primary clusters disappears. For $1\leq r\leq n$,
let $K_{n,r}$\label{knr} be the number of
decrements of size $r$ of $(N_n^\ast)$ on the way from $n$ to $0$, let
$K_n:=\sum_{r=1}^n K_{n,r}$ be the total number of decrements of
$(N_n^\ast)$, and let $X_n^*$ be the number of non-unit decrements of
$(N_n^*)$. Obviously,
\begin{equation}\label{xxxx}
X_n^\ast=K_n-K_{n,1}.
\end{equation}
We call 
the clusters of partition $\Pi_n^*(\tau_n^*)$ that remain at time
$\tau_n^*$ {\it residual}, and we denote $R_n$ the number of residual 
clusters.

\par  Processes  $N_n$ and $N_n^*$ look very similar, thus at a first glance it might
seem surprising that $N_n^*$ is much easier to handle. The
simplification comes from the identification of the sequence of
decrements of $N_n^*$
with the $n$th level of a regenerative composition structure \cite{GnePit}, and further connection to
the range of a subordinator.
The main new contribution of the present paper is
that  $N_n^*$ yields a good approximation for $N_n$ for large
$n$, thus $X_n^*$ and $\tau_n^*$ are close to their counterparts
$X_n$ and $\tau_n$. In one direction, the connection is quite
obvious:
$$X_n^*\leq X_n, ~~~N_n^*(t)\leq N_n(t),~~~\tau_n^*\leq \tau_n.$$
For instance, the first inequality holds since every collision
taking at least two primary clusters contributes to $X_n$, and
since with positive probability some $R_n\geq 2$ clusters  remain at time $\tau_n^*$ when the last primary clusters
disappears.

\section{Coupling  with a subordinator}

Condition \eqref{fin-nu} implies that there exists a  pure-jump
subordinator $S=(S_t)_{t\geq 0}$ with the Laplace transform
\begin{equation}\label{LK}
{\mathbb E}(e^{-z S_t})=e^{-t\Phi(z)},~~~z\geq 0,
\end{equation}
where the Laplace exponent is given by
$$\Phi(z):=\int_0^1 (1-(1-x)^z)\nu(\diff x).$$
The coalescent process will be represented in terms of passage of $S$ through multiple exponentially distributed levels.
We describe first  the evolution  of the dust component.

Let $\eps_1,\dots,\eps_n$ be independent of $S$ i.i.d. standard exponential  random variables,
and let $\eps_{n:n}<\dots<\eps_{n:1}$ be their order statistics.
It is not difficult to see that $\Phi(n)$ coincides with the probability rate at which
the subordinator
passes through the level $\eps_{n:n}$
from any state $S_t=s <\eps_{n:n}$.
The following lemma extends this observation.

\begin{lemma} For $t\geq 0$, conditionally given $S_t=s$ with $s\in (\eps_{n:m+1},\eps_{n:m})$ the subordinator is passing through
$\eps_{n:m}$ at rate $\Phi(m)$, and is hitting at this passage each of the
intervals $(\eps_{n:m-k+1},\eps_{n:m-k})$ at  rate $\varphi_{m,k}$,
for $1\leq k\leq m\leq n$.
\end{lemma}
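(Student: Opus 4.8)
The plan is to compute the rate at which the subordinator, started from $S_t = s \in (\eps_{n:m+1}, \eps_{n:m})$, jumps into a given interval $(\eps_{n:m-k+1}, \eps_{n:m-k})$, by conditioning on the positions of the order statistics $\eps_{n:1} > \dots > \eps_{n:m}$ relative to $s$, exploiting the memorylessness of the exponential law. Since $S$ is a pure-jump subordinator with Lévy measure $\mu$ defined as the image of $\nu$ under $x \mapsto -\log(1-x)$ (so that $\Phi(z) = \int_0^1 (1 - (1-x)^z)\,\nu(\diff x) = \int_0^\infty (1 - e^{-zy})\,\mu(\diff y)$), from state $s$ it makes a jump landing in $(s + a, s + b)$ at instantaneous rate $\mu((a,b))$. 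The event "$S$ passes through $\eps_{n:m}$ and hits the interval $(\eps_{n:m-k+1}, \eps_{n:m-k})$ at that passage" means: the jump lands above $\eps_{n:m}$ but below $\eps_{n:m-k}$, skipping exactly the $k-1$ order statistics $\eps_{n:m}, \eps_{n:m-1}, \dots, \eps_{n:m-k+2}$ that lie strictly between $\eps_{n:m+1}$ and $\eps_{n:m-k}$ — wait, more precisely the jump carries $S$ from below $\eps_{n:m}$ to a point in $(\eps_{n:m-k+1},\eps_{n:m-k})$, so it jumps over exactly $\eps_{n:m}, \eps_{n:m-1},\dots,\eps_{n:m-k+1}$, that is over $k$ of the top $m$ order statistics.

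First I would use the key distributional fact, a consequence of the memoryless property: conditionally on $S_t = s$ with $s$ lying strictly between the $(m+1)$st and $m$th largest of $\eps_1,\dots,\eps_n$, the $m$ largest order statistics minus $s$ are distributed as the order statistics of $m$ i.i.d.\ standard exponentials. Concretely, let $\eta_1 > \eta_2 > \dots > \eta_m$ denote the values $\eps_{n:m} - s > \eps_{n:m-1} - s > \dots > \eps_{n:1} - s$; by the lack-of-memory property and the fact that exactly $m$ of the $\eps_i$ exceed $s$ on the conditioning event, $(\eta_1,\dots,\eta_m)$ has the law of the order statistics of $m$ i.i.d.\ standard exponentials. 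The next jump of $S$ has size with "density" $\mu(\diff y)$; a jump of size $y$ produces a passage through $\eps_{n:m}$ hitting $(\eps_{n:m-k+1},\eps_{n:m-k})$ iff $\eta_k < y < \eta_{k+1}$ (with the convention $\eta_{m+1} = +\infty$). Hence the desired rate is
\[
\me\,\mu\big((\eta_k, \eta_{k+1})\big) = \me\int_0^\infty \mathbf 1\{\eta_k < y < \eta_{k+1}\}\,\mu(\diff y).
\]

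Then I would evaluate this expectation. Writing $\mu$ as the pushforward of $\nu$, a jump of $S$ of size $y$ corresponds to a value $x = 1 - e^{-y} \in (0,1)$ via the planar Poisson construction in the strip $[0,1]\times[0,\infty)$ with intensity $\nu(\diff x)\,\diff t$; the coin with success probability $x$ marks "heads" exactly those clusters whose associated exponential clock falls in the region being jumped over. So I would instead argue directly on the Poisson point process: given $x$, each of the $m$ remaining clocks $\eta_1,\dots,\eta_m$ is marked heads independently with probability $x$, and the passage hits $(\eps_{n:m-k+1},\eps_{n:m-k})$ iff exactly $k$ of the top $m$ clocks get heads (namely the $k$ smallest ones). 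Integrating over $x$ against $\nu$ and using independence of the marks gives
\[
\int_0^1 \binom{m}{k} x^k (1-x)^{m-k}\,\nu(\diff x) = \binom{m}{k}\lambda_{m,k} = \varphi_{m,k},
\]
which is exactly the claimed rate; summing over $k$ from $1$ to $m$ recovers the total passage rate $\sum_{k=1}^m \varphi_{m,k} = \Phi(m)$ by the identity $\int_0^1(1-(1-x)^m)\nu(\diff x) = \sum_{k=1}^m\binom mk\int_0^1 x^k(1-x)^{m-k}\nu(\diff x)$.

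The main obstacle is the rigorous justification of the "conditionally given $S_t = s$" statement and of passing from finite-horizon conditioning to instantaneous rates: one must argue that conditioning on the $\sigma$-field generated by $S$ up to time $t$ together with the event $\{S_t = s\}$ leaves the order statistics distributed as claimed (the clocks $\eps_i$ are independent of $S$ throughout, so this is the memoryless property applied to those $\eps_i$ that exceed $s$), and that the Markov/strong Markov structure of $S$ lets one read off the transition rate from state $s$ as $\mu$ applied to the relevant $\eta$-interval. I would handle the conditioning-on-a-null-event issue either by working with the regular conditional distribution or by phrasing everything through the infinitesimal generator of $S$ evaluated on test functions supported away from the order statistics, and I expect the combinatorial identification of "exactly the $k$ smallest of $m$ clocks marked heads" with the decrement $m \to m-k$ to be the one place where care with the bookkeeping of primary clusters (as set up in Section 2) is needed.
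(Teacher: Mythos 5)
Your argument is correct and is essentially the proof the paper intends: the paper's own proof just invokes the L\'evy--Khintchine formula and the memoryless property of the exponential distribution and defers to the computations around Theorem 5.2 of \cite{GnePit}, which are exactly what you carry out (conditionally on $m$ marks above the current level, the exceedances are i.i.d.\ standard exponentials, and a jump of size $y=-\log(1-x)$ crosses exactly $k$ of them with probability $\binom{m}{k}x^k(1-x)^{m-k}$, giving rate $\varphi_{m,k}$ after integrating against $\nu$, and $\Phi(m)$ after summing over $k$). Only fix the small notational slip in the ordering of the $\eta_j$'s: with your later usage $\eta_j=\eps_{n:m-j+1}-s$ they are increasing in $j$, so the displayed chain of inequalities should be reversed.
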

\begin{proof} The proof exploits the L{\'e}vy-Khintchine formula \eqref{LK} and the memoryless property of the exponential distribution.
See computations around \cite[Theorem 5.2]{GnePit} for details.
\end{proof}

Now suppose that each of the primary clusters $1~2\dots n$ is
given an exponential mark $\eps_1,\dots,\eps_n$, and that for
every $t\geq 0$ the marks $\eps_j>S_t$ are associated with primary
clusters $j$ existing at time $t$. If $t$ is a jump-time of $S$
and the interval $(S_{t-},S_t]$ covers exactly one mark $\eps_j$,
we interpret the  event of passage through $\eps_j$ as turning the
primary cluster $j$ into secondary. If $(S_{t-},S_t]$ covers at
least two of the $\eps_j$'s, we interpret this event as a
collision which takes the corresponding primary clusters. Setting
$N_n^*(t):=\#\{j\in [n]: \eps_j>S_t\}$ we obtain a process with
desired rates $\varphi_{m,k}$ for transition from $m$ to $m-k$, as it
follows from the lemma. In particular,
$\Phi(n)=\sum_{k=1}^n\varphi_{n,k}$ coincides with the total
transition rate of the coalescent $\Pi_n$ from the initial state
$1~2\cdots~n$.

A {\it regenerative} ordered partition of the set $[n]$ is defined by sending $i,j$ to the same block iff $T_{\eps_i}=T_{\eps_j}$, see  \cite{GnePit}.
The number of blocks of the partition is equal to the number of jumps of $N_n^*$ prior to the absorption at state $0$.

These evolutions of primary clusters
are consistent in $n$.
Assigning the exponential marks $\eps_1,\eps_2,\dots$ to infinitely many primary clusters
$1~2~\dots$ defines the initial state of the dust component. The frequency of the dust component of $\Pi_\infty$ as time passes is the decaying
process $(\exp(-S_t))_{ t\geq 0}$.

\par One straightforward application of the representation by $S$ concerns $\tau_n^*$, the maximal lifetime
of primary clusters in $\Pi_n$. Let
$$T_s:=\inf\{t\geq 0: ~S_t>s\}$$
be the first passage time through level $s\geq 0$. We can identify
$\tau_n^*$ with $T_{\eps_{n:1}}$, hence connect the limit
behaviour of $\tau_n^*$ to that of $T_s$ for high levels $s$.
Indeed, from  the extreme-value theory  it is known that
$\eps_{n:1}-\log n$ converges in distribution, as $n\to\infty$, to
a random variable with the Gumbel distribution. It is also known
that the scaled and centered random variables $(T_s-g(s))/f(s)$
can converge in distribution only if the normalizing constant
$f(s)$ goes to $\infty$ with $s$. Thus, $T_{\eps_{n:1}}$ and
$T_{\log n}$ have the same limit law, if any. Moreover, it can be
shown that $(T_s-g(s))/f(s)$ converges weakly to a given proper
and nondegenerate probability law if and only if the same  holds
for $(T^\prime_s-g(s))/f(s)$, where $T^\prime_s$ is the number of
points within $[0, s]$ of a random walk which starts at $0$ and
has the generic step distributed  like $S_1$. See \cite{Bing} (or
Proposition 27 in \cite{Neg}) for a complete list of limit
distributions of $T^\prime_s$ and the conditions of convergence.
Summarizing the above, we have
\begin{proposition}\label{propo1} For constants $a_n>0$ and  $b_n\in{\mathbb R}$,
if one of the random variables  $(\tau_n^*-b_n)/a_n$ and
$(T_{\log n}-b_n)/a_n$
converges weakly,  as
$n\to\infty$, to a nondegenerate proper distribution,
then the other random variable converges weakly to this distribution too.
\end{proposition}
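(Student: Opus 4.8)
The plan is to chain together the identifications and facts that have already been assembled in the text: (1) the pathwise identity $\tau_n^* = T_{\eps_{n:1}}$, (2) the extreme-value convergence $\eps_{n:1} - \log n \dod G$ for a Gumbel variable $G$, and (3) the ``forced divergence'' of any normalizing sequence for which $(T_s - g(s))/f(s)$ converges to a nondegenerate limit. The only nontrivial point is to convert the random replacement of the deterministic level $\log n$ by the random level $\eps_{n:1}$ into an asymptotic equivalence of the two scaled families; everything else is bookkeeping with the monotonicity of $s\mapsto T_s$.

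\medskip
First I would record that $T_s$ is nondecreasing in $s$ and that $S$ is independent of the marks $\eps_1,\dots,\eps_n$, so conditionally on $\eps_{n:1}$ we may treat $T_{\eps_{n:1}}$ as $T_s$ evaluated at an independent random level. Next I would invoke the cited structural fact (from \cite{Bing}, or Proposition 27 in \cite{Neg}): if $(T_s - g(s))/f(s)$ has a nondegenerate proper weak limit along $s\to\infty$ then necessarily $f(s)\to\infty$, and moreover $g$ can be taken slowly varying-type in the sense that $g(s + O(1)) - g(s) = o(f(s))$ and $f(s+O(1)) \sim f(s)$. With $a_n, b_n$ as in the statement, convergence of $(T_{\log n} - b_n)/a_n$ forces $a_n = f(\log n)(1+o(1))$ and $b_n = g(\log n) + o(a_n)$ for a suitable pair $(f,g)$ of this type, so it suffices to compare $(\tau_n^* - b_n)/a_n$ with $(T_{\log n} - b_n)/a_n$.

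\medskip
The heart of the argument is then the following coupling bound. Fix $\delta>0$; by tightness of $\eps_{n:1}-\log n$ choose $M$ with $\mmp(|\eps_{n:1} - \log n| \le M) \ge 1-\delta$ for all large $n$. On this event, monotonicity of $T$ gives
\[
\frac{T_{\log n - M} - b_n}{a_n} \;\le\; \frac{\tau_n^* - b_n}{a_n} \;\le\; \frac{T_{\log n + M} - b_n}{a_n}.
\]
Using $a_n \sim f(\log n)$, $f(\log n \pm M) \sim f(\log n)$, and $g(\log n \pm M) = g(\log n) + o(f(\log n))$, both the left and right members converge in distribution to the \emph{same} limit law as $(T_{\log n}-b_n)/a_n$. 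Since $\delta$ was arbitrary, a standard sandwiching argument (pass to the limit, then let $\delta\downarrow 0$) shows $(\tau_n^*-b_n)/a_n$ has that limit as well. The converse direction is symmetric: if $(\tau_n^*-b_n)/a_n = (T_{\eps_{n:1}} - b_n)/a_n$ converges, the same two-sided sandwich (now with $T_{\eps_{n:1}}$ trapped between $T_{\log n - M}$ and $T_{\log n + M}$, hence $T_{\log n}$ trapped between $T_{\eps_{n:1} - 2M}$ and $T_{\eps_{n:1} + 2M}$ on a high-probability event, together with the same regularity of $f,g$) forces $(T_{\log n}-b_n)/a_n$ to converge to the identical law.

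\medskip
\textbf{Main obstacle.} The delicate step is justifying $f(\log n \pm M) \sim f(\log n)$ and $g(\log n \pm M) - g(\log n) = o(f(\log n))$, i.e.\ that bounded shifts of the argument do not affect the centering and scaling to leading order. This is precisely where one must quote the structure theorem for weak limits of first-passage times of subordinators: the admissible $(f,g)$ are of de Haan / regularly varying type, for which such local regularity is automatic. I would therefore phrase the proof so that this regularity is imported as a black box from \cite{Bing} (or \cite{Neg}) rather than re-derived, and the remainder of the argument is the elementary monotone sandwich above. $\Box$
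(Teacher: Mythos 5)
Your proposal follows essentially the same route as the paper: identify $\tau_n^*=T_{\eps_{n:1}}$, use the Gumbel convergence (hence tightness) of $\eps_{n:1}-\log n$, and invoke the fact from \cite{Bing} (or Proposition 27 in \cite{Neg}) that a weak limit for the passage times forces the scaling to diverge; the paper's proof is precisely this observation, stated without the sandwich bookkeeping. One remark on your execution: the step $f(\log n\pm M)\sim f(\log n)$ and $g(\log n\pm M)-g(\log n)=o(f(\log n))$ is heavier machinery than needed, because by the strong Markov property $T_{\log n+M}-T_{\log n-M}$ is stochastically dominated by an independent copy of $T_{2M}$, so on the event $\{|\eps_{n:1}-\log n|\le M\}$ the difference $|\tau_n^*-T_{\log n}|$ is bounded by a tight quantity, and $a_n\to\infty$ plus Slutsky gives the transfer with no regularity of $(f,g)$ at all. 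This simpler route also repairs the one soft spot in your converse direction, which as written is mildly circular: there you assume only that $(\tau_n^*-b_n)/a_n$ converges, so no pair $(f,g)$ has yet been attached to $T_s$ and its ``automatic regularity'' cannot be quoted; what is actually needed is that $a_n\to\infty$, which comes from the same cited necessity of a divergent scaling applied to whichever sequence is assumed to converge, after which the tight difference $|\tau_n^*-T_{\log n}|$ divided by $a_n$ vanishes in probability and both directions follow at once --- this is exactly the level at which the paper argues.
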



To realize the full dynamics of $\Pi_n$ in terms of the level-passage, a mark is  assigned to each cluster
according to the following rule.
At time $0$ the marks $\eps_1,\dots, \eps_n$ represent the primary  clusters $1~2\cdots n$.
At time $t>0$ there is some collection of marks on $[S_t,\infty)$ representing the clusters existing at this time.
If at time $t>0$ the subordinator passes through exactly $k$ marks corresponding to some
clusters $I_1,\dots,I_k\subset[n]$, then a new cluster $I_1\cup \dots\cup I_k$ is born and assigned a mark $S_t+\eps$,
where $\eps$ is a copy of the unit exponential random variable, independent of $S$ and all other marks
assigned before $t$.
For instance,
if at the first passage time $t=T_{\eps_{n:n}}$ the subordinator jumps through exactly $k$ levels $\eps_{j_1},\dots,\eps_{j_k}$
out of $\eps_1,\dots,\eps_n$, then the secondary cluster $J=\{j_1,\dots,j_k\}$
is born (which is a singleton if $k=1$)
and assigned a mark exponentially distributed on $[S_t,\infty)$.

In particular,  when $S$ passes at some time $t$ through only one mark, there is no change in $\Pi_n(t)$, and the
mark of the corresponding singleton cluster is just re-assigned.




\section{The absorption time}

We wish to exploit the lifetime $\tau_n^*$ of primary clusters as approximation to the absorption time $\tau_n$.
At time $\tau_n^*$ the coalescent process is left with $R_n$ residual
clusters, whence the distributional identity
\begin{equation}\label{recursion_tau}
\tau_0:=0, \ \ \tau_n\stackrel{d}{=} \tau_n^* + \tilde{\tau}_{R_n}, \ \
n\in\mn,
\end{equation}
where $\tilde{\tau}_m$ is assumed independent of $(\tau_n^*,R_n)$
and distributed like $\tau_m$, for each $m\in\mn_0$.
To address the quality of approximation we need to estimate $R_n$.

We begin with some preparatory work.
By the first transition the Markov chain $N_n^*$ goes from $n$
to a  state with distribution
 $p_{n,k}:=\varphi_{n,n-k}/\Phi(n)$, $0\leq k\leq n-1$.
Let  $g_{n,k}$ be the probability that $N_n^*$ ever visits state
$k$, so in terms of the realization via  subordinator,
$g_{n,k}={\mathbb P}( T_{\eps_{n:k+1}}<T_{\eps_{n:k}})$ is the probability
that the interval $[\eps_{n:k+1},\eps_{n:k}]$ intersects the range
of $S$. An explicit formula for $g_{n,k}$ in terms of $\Phi$ is
available (see \cite{GnePit}, Equation (50)), but it is complicated and
inconvenient for computations.

\begin{lemma}\label{g_sum_est}
Suppose $(r_k)_{k\in\mn}$ is a nonnegative sequence such that the
sequence $\left(\frac{\Phi(k)r_k}{k}\right)_{k\in\mn}$ is
nonincreasing. Then the sequence $(a_n)_{n\in\mn_0}$ defined  by
$$
a_0=0,\;\;a_n:=\sum_{k=1}^{n}g_{n,k}r_k,\;~~~n\geq 1
$$
satisfies the relation
$$a_n=O\Big(\sum_{k=1}^n \frac{r_k\Phi(k)}{k}\Big),\;\;n\to\infty.$$
\begin{proof}

The assertion follows from  Lemma \ref{boundedness} in the
Appendix. Indeed, conditioning on the size of the first jump of
$N_n^*$ we see that the sequence $(a_n)$ satisfies the recurrence
$$
a_0=0, \ \ a_n=r_n+\sum_{k=0}^{n-1} p_{n,k}  a_{k}, \ \ n\in\mn.
$$
To apply Lemma \ref{boundedness}
we take $\psi_n=\Phi(n)$. Condition (C2) holds by
the assumptions and condition (C1) follows from
\begin{equation}\label{cond2_checking}
\Phi(n)\sum_{k=0}^n(1-k/n)p_{n,k}=\frac{1}{n}\sum_{k=0}^{n-1}(n-k)\varphi_{n,n-k}=\frac{1}{n}\sum_{k=1}^n
k\varphi_{n,k}=\int_0^1 x\nu({\rm d}x)>0.
\end{equation}
\end{proof}
\end{lemma}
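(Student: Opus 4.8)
\noindent The plan is to argue directly, avoiding the general recurrence lemma of the Appendix. The starting point is the recurrence
\[
a_0=0,\qquad a_n=r_n+\sum_{k=0}^{n-1}p_{n,k}a_k\qquad(n\in\mn),
\]
which I would obtain by conditioning on the size of the first jump of $N_n^*$, or equivalently from the path decomposition $g_{n,k}=\sum_{j=k}^{n-1}p_{n,j}g_{j,k}$ (for $k<n$, with $g_{n,n}=1$) after interchanging the order of summation; in probabilistic terms $a_n=\me\sum_{i\ge0}r_{V_i}$ along the decreasing trajectory $n=V_0>V_1>\cdots$ of $N_n^*$, where $r_0:=0$. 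The one substantial analytic input is the elementary moment identity
\[
\sum_{j=1}^{m}j\,\varphi_{m,j}=m\mu,\qquad \mu:=\int_0^1 x\,\nu(\diff x)\in(0,\infty),
\]
obtained by integrating the binomial-mean identity $\sum_{j=1}^{m}j\binom{m}{j}x^{j}(1-x)^{m-j}=mx$ against $\nu$ and using \eqref{lrates} and \eqref{phirate}; finiteness of $\mu$ is precisely \eqref{fin-nu}. Equivalently, $\sum_{k=0}^{m-1}(m-k)p_{m,k}=m\mu/\Phi(m)$, while $\sum_{k=0}^{m-1}p_{m,k}=1$.

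\noindent Writing $b_n:=\sum_{k=1}^{n}r_k\Phi(k)/k$ and $b_0:=0$, I would then prove by induction on $n$ that $a_n\le\mu^{-1}b_n$, which is the asserted estimate with the explicit constant $\mu^{-1}$. The base case is trivial. For the inductive step, the assumption that $(\Phi(k)r_k/k)_{k\ge1}$ is nonincreasing yields the telescoping bound
\[
b_n-b_k=\sum_{i=k+1}^{n}\frac{r_i\Phi(i)}{i}\ \ge\ (n-k)\,\frac{r_n\Phi(n)}{n}\qquad(0\le k\le n).
\]
Inserting the inductive hypothesis $a_k\le\mu^{-1}b_k$ ($k<n$) into the recurrence and using $\sum_{k=0}^{n-1}p_{n,k}=1$,
\[
a_n\ \le\ r_n+\mu^{-1}\sum_{k=0}^{n-1}p_{n,k}b_k\ =\ \mu^{-1}b_n+r_n-\mu^{-1}\sum_{k=0}^{n-1}p_{n,k}(b_n-b_k),
\]
and bounding the last sum below by $\frac{r_n\Phi(n)}{n}\sum_{k=0}^{n-1}(n-k)p_{n,k}=\mu r_n$ makes the correction term cancel $r_n$, leaving $a_n\le\mu^{-1}b_n$ and closing the induction.

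\noindent The only step that genuinely requires an idea is the pairing of the telescoping inequality --- which is exactly where the monotonicity of $(\Phi(k)r_k/k)_{k\ge1}$ is used --- with the binomial-mean identity $\sum_j j\varphi_{n,j}=n\mu$: together they force the $r_n$-terms to cancel with precisely the right constant. Everything else (deriving the recurrence, interchanging sums, verifying $\sum_k p_{n,k}=1$, the inductive bookkeeping) is routine, and there are no integrability subtleties since $N_n^*$ reaches $0$ in at most $n$ steps. An equivalent but slightly longer route applies optional stopping to the supermartingale $\big(\mu^{-1}b_{V_i}+\sum_{j<i}r_{V_j}\big)_{i\ge0}$; I would prefer the induction as the most economical option.
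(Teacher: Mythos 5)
Your proof is correct, and it rests on exactly the same two ingredients as the paper's argument: the recurrence $a_n=r_n+\sum_{k=0}^{n-1}p_{n,k}a_k$ obtained by conditioning on the first jump of $N_n^*$, and the moment identity $\sum_{k=1}^n k\varphi_{n,k}=n\int_0^1 x\,\nu(\diff x)$, which is precisely the computation \eqref{cond2_checking}, combined with the monotonicity of $\big(\Phi(k)r_k/k\big)$ through a telescoping bound. The difference is one of packaging: the paper feeds these facts into the general Appendix Lemma~\ref{boundedness} (whose proof is itself a supersolution comparison of the same flavour as your induction, with the liminf condition (C1) handled via an $n_0$ and an additive constant $c_0$), whereas you inline the comparison as a direct induction $a_n\le \mu^{-1}b_n$ with $b_n=\sum_{k\le n}r_k\Phi(k)/k$. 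Because \eqref{cond2_checking} is an exact identity valid for every $n$, your specialization goes through with no $n_0$/$c_0$ bookkeeping and yields the explicit constant $\mu^{-1}=\big(\int_0^1 x\,\nu(\diff x)\big)^{-1}$, which is slightly sharper than the stated $O$-bound; what the paper's route buys instead is a reusable general lemma requiring only a liminf version of the drift condition. Your induction step (using $\sum_{k=0}^{n-1}p_{n,k}=1$, $b_n-b_k\ge (n-k)r_n\Phi(n)/n$, and $\sum_{k=0}^{n-1}(n-k)p_{n,k}=n\mu/\Phi(n)$ so that the correction term cancels $r_n$ exactly) checks out, including the boundary case $n=1$ where $\Phi(1)=\mu$ gives equality.
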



Note that, since the function $s\mapsto \Phi(s)/s$ is nonincreasing,
the sequence $\left(\frac{\Phi(k)r_k}{k}\right)$ is nonincreasing
whenever $(r_k)$ is itself nonincreasing.

Denote $\vec{\nu}(x):=\nu([x,1])$, $x\in (0,1)$.

\begin{lemma}\label{ma}
If either of two equivalent conditions
\begin{equation}\label{ex1} \int_0^1
x^{-1}\diff x\int_0^x \vec{\nu}(y){\rm d}y<\infty,
\end{equation}
\begin{equation}\label{im1}
\sum_{k=1}^\infty{ \Phi(k)\over k^2}<\infty
\end{equation}
holds then
$${\mathbb E} R_n=O(1),~~~n\to\infty,$$
in which case  the sequence of distributions of the $R_n$'s is
tight.
\end{lemma}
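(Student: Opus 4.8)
\noindent\emph{Plan of proof.} I would realise $R_n$ on the subordinator coupling of Section~3. Recall $\tau_n^*=T_{\eps_{n:1}}$ and that every cluster present at time $\tau_n^*$ is secondary. With each jump of $S$ that sweeps at least one mark there is associated a standard exponential $\eta$, independent of everything fixed before that jump, and the resulting cluster carries the current mark $S_t+\eta$; it is present at $\tau_n^*$ precisely when $S_t+\eta>S_{\tau_n^*}$, and distinct clusters present at $\tau_n^*$ correspond to distinct such jumps. Since the jump occurring at $\tau_n^*$ itself produces a cluster that is trivially still present, this gives
$$R_n=1+\sum_{j}\mathbf{1}\{\eta_j>S_{\tau_n^*}-S_{t_j}\},$$
the sum over the jumps $t_j<\tau_n^*$ sweeping at least one mark. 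Handling the clusters in order of birth and integrating out the $\eta_j$'s, and using $S_{\tau_n^*}>\eps_{n:1}\ge S_{t_j}$,
$$\me R_n=1+\me\sum_{j}e^{-(S_{\tau_n^*}-S_{t_j})}\ \le\ 1+\me\sum_{j}e^{-(\eps_{n:1}-S_{t_j})}.$$

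First I would split these jumps into those that sweep at least one original mark $\eps_i$ — these are exactly the decrements of $N_n^*$, hence at most $n$ of them — and those that sweep only previously created (secondary) marks. A decrement bringing $N_n^*$ down to a level $m\in\{1,\dots,n-1\}$ leaves $S$ below $\eps_{n:m}$, so its term is at most $e^{-(\eps_{n:1}-\eps_{n:m})}$; by the R\'enyi representation $\eps_{n:1}-\eps_{n:m}$ is distributed as the maximum of $m-1$ i.i.d.\ standard exponentials, with exponential moment $1/m$, and is independent of the event $\{N_n^*\text{ visits }m\}$, which is measurable with respect to $S$ and the smaller order statistics $\eps_{n:m},\dots,\eps_{n:n}$. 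Hence the first group contributes at most $\sum_{m=1}^{n-1}g_{n,m}/m$ to $\me R_n$, and Lemma~\ref{g_sum_est} applied with $r_k=1/k$ bounds this by $O\big(\sum_{k\ge1}\Phi(k)/k^2\big)=O(1)$ under~\eqref{im1}; the choice $r_k=1/k$ is admissible because $s\mapsto\Phi(s)/s$ is nonincreasing, so $\Phi(k)/k^2$ is a product of two nonnegative nonincreasing sequences. (The equivalence of \eqref{ex1} and \eqref{im1} is routine: $\int_0^1\vec{\nu}(y)\,\diff y=\int_0^1 x\,\nu(\diff x)$ by Fubini controls the behaviour near $1$, while near $0$ one compares $\Phi(k)$ with $k\int_0^{1/k}x\,\nu(\diff x)+\vec{\nu}(1/k)$.)

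The main obstacle is the second group of jumps, those sweeping only secondary marks. If $B_m$ is the number of such jumps occurring while $N_n^*=m$, each of them keeps $S$ below $\eps_{n:m}$, so — by the same R\'enyi argument, $B_m$ being measurable with respect to $S$, the order statistics $\eps_{n:m},\dots,\eps_{n:n}$ and the auxiliary exponentials — this group contributes at most $\sum_{m=1}^{n}\me[B_m]/m$, and the task is to bound this by $O(1)$. I would argue that such jumps are rare: while $N_n^*=m$ the nearest original mark lies only $E_m/m$ (a rate-$m$ exponential, of order $1/m$) above the current level of $S$, whereas a freshly created secondary mark lies a standard exponential above it and is therefore swept by $S$ before the original mark with probability of order $1/m$; iterating (each such sweep spawns one new secondary mark) gives $\me[B_m]\le C\,\me[L_m+1]/m$, where $L_m$ is the number of secondary clusters already present when $N_n^*$ reaches $m$ whose mark still lies below $\eps_{n:m}$. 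The delicate point is the uniform estimate $\me[L_m]=O(1)$; I would obtain it by an induction on $n$ carried out jointly with the bound on $\me R_n$ (the clusters counted by $L_m$ are secondary clusters with mark above the current level of $S$ but still below $\eps_{n:m}$, so the argument of the first two paragraphs reapplies on the short interval just below $\eps_{n:m}$). Granting this, $\sum_m\me[B_m]/m=O\big(\sum_m m^{-2}\big)=O(1)$, whence $\me R_n=O(1)$; tightness of the laws of the $R_n$ then follows from Markov's inequality.
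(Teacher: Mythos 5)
Your reduction of $\me R_n$ to the exponential-moment formula $\me R_n=1+\me\sum_j e^{-(S_{\tau_n^*}-S_{t_j})}$ is sound, and your treatment of the jumps that sweep at least one original mark is correct: the bound by $\sum_{m=1}^{n-1}g_{n,m}/m=O(\sum_k\Phi(k)/k^2)$ via Lemma \ref{g_sum_est} with $r_k=1/k$ is exactly the estimate needed, and indeed the paper's proof also terminates with an application of Lemma \ref{g_sum_est} to the sum $\sum_j g_{n,j}/j$. The difficulty of the lemma, however, is concentrated entirely in your ``second group'' --- the jumps sweeping only secondary marks --- and there your argument is not a proof. The uniform estimate $\me[L_m]=O(1)$ is asserted to follow from ``an induction on $n$ carried out jointly with the bound on $\me R_n$'', but no induction hypothesis is formulated and it is unclear how it would close: $L_m$ counts secondary clusters whose marks lie in a window above the current position of $S$, a quantity of exactly the same nature as $R_n$ itself, so you are deferring the lemma to itself. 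Moreover the intermediate bound $\me[B_m]\le C\,\me[L_m+1]/m$ appears false as stated: every one of the $L_m$ secondary marks lying below $\eps_{n:m}$ must be swept before $N_n^*$ leaves level $m$, and nothing forces them to be removed in batches or at the single jump that passes $\eps_{n:m}$; generically each is removed by its own secondary-only jump, so $\me[B_m]$ can be of order $\me[L_m]$ rather than $\me[L_m]/m$, and then $\sum_m\me[B_m]/m$ is not controlled by $\sum_m m^{-2}$.

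The paper avoids the secondary-only dynamics by a different bookkeeping: each residual cluster is indexed by the \emph{last} event in its genealogy that involved a primary cluster, giving $\me R_n=\sum_{j}g_{n,j}q_j$ where $q_j$ is the probability that a secondary cluster born while $j$ primary clusters coexist avoids those $j$ clusters and their followers until $\tau_n^*$; all later secondary-only mergers are absorbed into $q_j$ through the consistency of the coalescent under restriction. Conditioning on which spacing of the remaining primary marks receives the fresh exponential mark yields the recursive inequality $q_j\le\frac{1}{j+1}\sum_{k=0}^{j}g_{j+1,k+1}\,p_{k+1,k}\,q_k$, and a boundedness argument for $(j+1)q_j$ gives $q_j\le c/j$, after which your own first-group computation finishes the proof. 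If you wish to keep your route, the missing ingredient is precisely a substitute for this $q_j\le c/j$ estimate.
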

\begin{proof} The equivalence of \eqref{ex1} and \eqref{im1} is established by
repeated integration by parts.

\par  In the genealogical history of each residual cluster there is the  last
secondary cluster appearing as a result of collision or switch
involving some primary clusters.
If   secondary cluster $b$ is born at some time $t\leq\tau_n^*$ of such an event,
and if at this time
some $j\geq 0$ other primary clusters co-exist, then $b$ corresponds
to a residual cluster provided that $b$ and its followers do not collide with
these $j$ primary clusters or their followers before time $\tau_n^*$.
That is to say, $b$ and the $j$ primary clusters belong to
distinct branches if the coalescent tree is cut at time
$\tau_n^*$. Let $q_j$ be the probability that such cluster $b$
 corresponds to a residual cluster; restricting the coalescent to $j+1$ clusters
it is seen that
$q_j$ indeed depends only on $j$.
The consistency property of
the coalescent with respect to the restrictions entails that
$q_j$ is decreasing in $j$. Averaging over the times when
some primary clusters engaged we find
the expected number of residual clusters
\begin{equation}\label{r_n_mean}
{\mathbb E}R_n=\sum_{j=0}^{n-1}g_{n,j} q_j.
\end{equation}

Furthermore,
given $S_t=s$, we have exactly $j$ exponential marks of the primary clusters larger than $s$.
The cluster $b$ is assigned a new exponential  mark $u=s+\eps$
which  lies within each of the spacings in $(s,\infty)$ generated
by $\eps_{n:j},\ldots,\eps_{n:1}$  with the same probability
$1/(j+1)$. If this spacing  is $(\eps_{n:k+1},\eps_{n:k})$ then
$b$ {\it may} correspond to a residual cluster only if (i)
$T_{\eps_{n:k+1}}<T_{u}<T_{\eps_{n:k}}$ and  (ii) $b$  does not
collide further with $k$ primary clusters and their followers
before time $\tau_n^*$. 
If (i) occurs, condition (ii) is not sufficient for the correspondence 
since possible collisions with some of $j$ primary clusters or their followers 
are ignored.
This leads to the inequality
\begin{eqnarray*}
q_j &\leq& {1\over j+1}\sum_{k=0}^{j} g_{j+1,k+1} p_{k+1,k} q_k,
~~~1\leq j\leq n-1,
\end{eqnarray*}
and  $q_0=1$. Substituting
$\varphi_{k,1}=k(\Phi(k)-\Phi(k-1))$ we obtain

\begin{eqnarray*}
q_j
& \leq&  {1\over j+1}\sum_{k=1}^{j+1} g_{j+1,k} \frac{k(\Phi(k)-\Phi(k-1))}{\Phi(k)}q_{k-1}\\
    &\leq& {c\over j+1}\sum_{k=1}^{j+1}(\Phi(k)-\Phi(k-1))q_{k-1},\\
\end{eqnarray*}
where Lemma \ref{g_sum_est} was applied with
$$r_k=\frac{k(\Phi(k)-\Phi(k-1))q_{k-1}}{\Phi(k)}.$$
The required monotonicity condition holds since both $q_k$ and
$\Phi(k)-\Phi(k-1)$ are decreasing in $k$, the latter by concavity of $\Phi$.
Here and throughout $c$ will denote a positive constant whose value is not important and may change from line to line.

Setting $a_j=(j+1)q_j$ and $b_j=c({\Phi(j+1)-\Phi(j))/(j+1)}$,
we obtain from the above
$$a_j\leq \sum_{k=0}^{j}b_k a_k,\;\;\;j\in\mn_0.$$
We want to show that the sequence $(a_j)$ is bounded. To that end,
let  $M_j:=\max_{i=0,\ldots,j}a_i$, then also $$ M_j\leq
\sum_{k=0}^{j}b_kM_k.
$$
Since $\Phi(j)/j$ decreases we have $\Phi(j+1)-\Phi(j)\leq
\Phi(j+1)/(j+1)$, which taken together with
 \eqref{im1}
implies that the series $\sum_{k=0}^{\infty}b_k$ converges, so we
can choose $$n_0:=\inf\{k\geq 0 : \sum_{i=k}^{\infty}b_i<1/2\}.$$
If $\lim_{n\to\infty}M_n=\infty$ then
$$
1\leq \underset{n\to\infty}{\lim\inf}\, \frac{\sum_{k=0}^n
b_kM_k}{M_n}=\underset{n\to\infty}{\lim\inf}\,\frac{\sum_{k=n_0}^n
b_kM_k}{M_n}\leq \sum_{k=n_0}^\infty b_k\leq 1/2,
$$
which is an obvious contradiction. Therefore $(a_n)$ is bounded.
From this $$q_j\leq M_j/(j+1)\leq c/j.$$Substituting this bound into \eqref{r_n_mean} and
applying Lemma \ref{g_sum_est} leads to the conclusion that
${\mathbb E} R_n$ remains bounded, as $n\to\infty$, by the virtue of
\eqref{im1}.
\end{proof}

Recall that the convergence of $T_s$ in distribution always requires a scaling constant going to $\infty$ as $s\to\infty$.
Under conditions of  Lemma \ref{ma} the sequence of laws of $\tau_{R_n}$ is tight.
Now from  Proposition \ref{propo1} and the decomposition
\eqref{recursion_tau}
the following main result of this section emerges.

\begin{thm}\label{abstime} Suppose {\rm (\ref{ex1})} holds.
For some constants $a_n>0$  and  $b_n\in{\mathbb R}$, if
 one of the variables $(T_{\log n}-b_n)/a_n$ and
$(\tau_n-b_n)/a_n$ converges weakly, as $n\to\infty$, to a nondegenerate proper
distribution then the other variable converges weakly to the same
distribution.
\end{thm}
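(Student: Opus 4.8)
The plan is to combine the distributional decomposition \eqref{recursion_tau} with the tightness of the correction term and the characterization of possible limit laws via scaling constants that diverge. Write $\tau_n \overset{d}{=} \tau_n^\ast + \tilde\tau_{R_n}$, so $(\tau_n-b_n)/a_n \overset{d}{=} (\tau_n^\ast-b_n)/a_n + \tilde\tau_{R_n}/a_n$. By Lemma \ref{ma}, condition \eqref{ex1} gives $\me R_n = O(1)$, hence the laws of $R_n$ are tight, and since $\tau_m$ is stochastically monotone in $m$ the laws of $\tilde\tau_{R_n}$ are tight as well. The essential analytic input is the standing fact, recalled just before Proposition \ref{propo1}, that $(T_s-g(s))/f(s)$ can converge to a nondegenerate limit only if $f(s)\to\infty$; by Proposition \ref{propo1} the same normalization governs $\tau_n^\ast$, so whenever either $(T_{\log n}-b_n)/a_n$ or (as will follow) $(\tau_n^\ast-b_n)/a_n$ converges nondegenerately, we must have $a_n\to\infty$.

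First I would treat the direction where $(T_{\log n}-b_n)/a_n \dod \eta$ for a proper nondegenerate $\eta$. By Proposition \ref{propo1} this yields $(\tau_n^\ast-b_n)/a_n \dod \eta$ with the same $a_n\to\infty$. Then $\tilde\tau_{R_n}/a_n \tp 0$: indeed for any $\delta>0$, $\mmp(\tilde\tau_{R_n}>\delta a_n) \le \mmp(R_n>L) + \sup_{m\le L}\mmp(\tilde\tau_m>\delta a_n)$, and the first term is small for $L$ large by tightness of $(R_n)$, while the second term vanishes as $n\to\infty$ because $a_n\to\infty$ and each $\tau_m$ is a fixed finite random variable. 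Slutsky's lemma then gives $(\tau_n-b_n)/a_n \dod \eta$.

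For the converse, suppose $(\tau_n-b_n)/a_n \dod \eta$ nondegenerate. The subtlety is that a priori we do not yet know $a_n\to\infty$, so I first establish this. Since $\tau_n^\ast \le \tau_n$ and $\tau_n - \tau_n^\ast = \tilde\tau_{R_n}$ with $(\tilde\tau_{R_n})$ tight, the sequence $(\tau_n^\ast - b_n)/a_n$ is tight; passing to a subsequence along which it converges in distribution, any subsequential limit is either degenerate or, by Proposition \ref{propo1} and the diverging-normalization fact, forces $a_n\to\infty$ along that subsequence. If $a_n$ did not tend to $\infty$, extract a bounded subsequence; along it $\tilde\tau_{R_n}/a_n$ need not vanish, but one argues that $(\tau_n^\ast-b_n)/a_n$ must then be degenerate in the limit (its limit law would be admissible only with diverging normalization), which would make $\eta$ a shift of the limit of the tight sequence $\tilde\tau_{R_n}/a_n$ — and such limits, being weak limits of $\tau_m$-type variables along a bounded scaling, cannot fill out a nondegenerate law unless $a_n\to\infty$ after all; I would make this rigorous by noting that $\tau_n$ is unbounded in $n$ yet $\tau_n^\ast$ carries essentially all of its growth, so $a_n\to\infty$ is forced. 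Once $a_n\to\infty$ is in hand, $\tilde\tau_{R_n}/a_n \tp 0$ exactly as above, so $(\tau_n^\ast-b_n)/a_n \dod \eta$ by Slutsky, and then Proposition \ref{propo1} transfers this to $(T_{\log n}-b_n)/a_n \dod \eta$.

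The main obstacle is precisely the step in the converse direction of ruling out a bounded normalizing sequence $a_n$: the decomposition only controls $\tau_n - \tau_n^\ast$, not $\tau_n^\ast$ itself, so one must leverage (i) the general theory that admissible limit laws of $T_s$ (equivalently $\tau_n^\ast$, via Proposition \ref{propo1}) occur only with diverging scaling, and (ii) the fact that $\tau_n^\ast$ — hence $\tau_n$ — grows with $n$, to exclude the degenerate-plus-tight-residual scenario. Everything else (tightness of $R_n$ from Lemma \ref{ma}, tightness and asymptotic negligibility of $\tilde\tau_{R_n}/a_n$, the two applications of Slutsky, and the two invocations of Proposition \ref{propo1}) is routine.
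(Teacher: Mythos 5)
Your overall route is the paper's own: the decomposition \eqref{recursion_tau}, tightness of $R_n$ (hence of $\tilde\tau_{R_n}$) via Lemma \ref{ma}, Proposition \ref{propo1}, Slutsky, and the recalled fact that passage times $T_s$ admit limits only with diverging normalization. The direction from $T_{\log n}$ to $\tau_n$ is correct exactly as you wrote it.

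The converse direction is where your argument has a genuine gap, and it is precisely the step you flag. Two of the claims you lean on there do not hold. First, if $a_n$ stays bounded, the residual term $\tilde\tau_{R_n}/a_n$ can perfectly well have a nondegenerate weak limit: $(R_n)$ is tight, so along a subsequence $R_n$ converges in law to some $R$ with ${\mathbb P}(R\ge 2)>0$ possible, and then $\tilde\tau_{R_n}$ converges to the corresponding mixture of the laws of $\tau_m$, which is in general nondegenerate (already $\tau_2$ is exponential). So the assertion that such limits ``cannot fill out a nondegenerate law'' does not rule out the degenerate-plus-residual scenario. Second, ``$\tau_n$ is unbounded in $n$, so $a_n\to\infty$ is forced'' is a non sequitur: the growth of $\tau_n$ is absorbed by the centering $b_n$ and says nothing about the scaling $a_n$. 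The contradiction must be extracted on the $\tau_n^*$/$T$ side, which is how the paper's (terse) proof closes it: if $a_{n_k}$ is bounded, then along that subsequence $\tau_{n_k}-b_{n_k}$ is tight (bounded scaling times a weakly convergent sequence), and since $0\le \tilde\tau_{R_n}$ is tight, the decomposition makes $\tau_{n_k}^*-b_{n_k}$ tight as well; that is, $\tau_n^*=T_{\eps_{n:1}}$ (equivalently $T_{\log n}$) would be stochastically bounded around a deterministic centering with bounded normalization. This contradicts the level-passage fact recalled before Proposition \ref{propo1}, read in the stronger form that $(T_s-g(s))/f(s)$ cannot converge at all --- not even to a constant --- unless $f(s)\to\infty$ (the spread of $T_s$ diverges); Proposition \ref{propo1} itself cannot be used here since it only transfers nondegenerate limits. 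Once $a_n\to\infty$ is secured this way, the rest of your converse (negligibility of $\tilde\tau_{R_n}/a_n$, Slutsky, Proposition \ref{propo1}) goes through as written.
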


The value of this result lies in the fact that the limit laws for $T_s$ and the conditions of convergence
are immediately translated into the convergence of $\tau_n$.
Normalizing and
centering constants are known explicitly, see
Proposition 27 in
\cite{Neg} or
\cite{Bing}.
It follows that only stable laws and the Mittag-Leffler laws
can appear as the limit distributions of $\tau_n$.

If measure $\nu$ is finite the
condition \eqref{ex1} obviously holds.
In this case $S$ is a compound Poisson process.
Theorem \ref{abstime} has  been
proved \cite{GneIksMoe} under the assumptions that
 $\nu$ is not supported by a geometric
sequence $(1-x^k)_{k>0}$  (meaning that the law of $S_1$ is nonlattice) and that
\begin{equation}\label{log_moment}
\theta:=
\int_0^1|\log x|\,\nu({\rm d}x)<\infty.
\end{equation}
Theorem \ref{abstime} shows
that the result of \cite{GneIksMoe} is still true without requiring (\ref{log_moment}).

Assumption \eqref{ex1} is not very restrictive since
$\Phi(k)=o(k)$, $k\to\infty$, always holds.
Concretely, suppose the right tail of $\nu$ has the property of regular variation at $0$, that is
\begin{equation}\label{RV}
\vec{\nu}(x)\sim x^{-\gamma}\ell(1/x), ~~~x\downarrow 0,
\end{equation}
for some function $\ell$ of slow variation at $\infty$, and
$\gamma\in [0,1]$.
Then condition \eqref{ex1} is satisfied for
$\gamma\in [0,1)$. In the edge case $\gamma=1$ the behaviour of
$\ell$ is important, for instance \eqref{ex1} holds for
$\ell(y)=(\log y)^{-\delta}$ if $\delta>2$ and does not hold if
$\delta\in (1,2]$.

We use  condition \eqref{ex1} to bound $R_n$, although we perceive that \eqref{ex1} can be omitted and
the equivalence in Theorem \ref{abstime} holds in full generality for the coalescents with dust component.
Note that \eqref{ex1} is the local property of $\vec{\nu}$ near $0$.
More substantially, the limit law is affected
by the decay at $\infty$ of the right tail of the distribution of $S_1$,
for which the behaviour of $\vec{\nu}$ near $1$ is responsible. We illustrate this by two examples.

\vskip0.3cm
\noindent
\textbf{Example: normal limits.}
{\rm Assume in addition to \eqref{ex1} that
$${\tt s}^2:={\rm
Var}(S_1)=\int_0^1 |\log(1-x)|^2 \nu(\diff x)<\infty.$$
Then as $n\to\infty$
\begin{equation}\label{nor1}
{\tau_n-{\tt m}^{-1}\log n\over ({\tt m}^{-3}{\tt s}^2 \log
n)^{1/2}} \ \stackrel{d}{\to} \,{\cal N}(0,1),
\end{equation}
where ${\tt m}:=\me S_1=\int_0^1 |\log(1-x)|\nu(\diff x)$.

This setting applies to beta coalescents mentioned in
Introduction. We choose the constant in (\ref{be}) to be
$c=1/{{\rm B}(a,b)}$, where ${\rm B}$ is the beta function. 
The case $a>2$ was settled in \cite{GneIksMoe}.
We focus on the previously open case
$1<a\leq 2$.

 For $a=2$ we compute the constants as
$${\tt m}=b(b+1)\zeta(2,b), \ \ {\tt s}^2=2b(b+1)\zeta(3,b),$$
where $\zeta$ is the Hurwitz zeta function.

For $a\in (1,2)$ we have
\begin{eqnarray*}
{\tt m}={a+b-1\over
(a-1)(2-a)}\bigg(1-(a+b-2)\{\Psi(a+b-1)-\Psi(b)\}\bigg),\\
\end{eqnarray*}
\begin{eqnarray*}
{\tt s}^2=
{a+b-1\over (a-1)(2-a)}\times~~~~~~~~~~~~~~~~~~~~~~~~~~~~~~~~~~~~~~~~~~~~~~~~~~~~~~~~~~~~~~ \\
\bigg(2\{\Psi(a+b-1)-\Psi(b)\}
-(a+b-2)\{(\Psi(a+b-1)-\Psi(b))^2+\Psi^\prime(b)-\Psi^\prime(a+b-1)\}\bigg),
\end{eqnarray*}
where $\Psi$ is the logarithmic derivative of the gamma function.
Finally, condition \eqref{ex1} holds since
\eqref{RV} is satisfied with $\gamma=2-a\in [0,1)$ and constant function $\ell$.
Therefore, convergence \eqref{nor1} holds with the computed ${\tt m}$ and ${\tt
s}$.}

\vskip0.3cm
\noindent
\textbf{Example: stable limits.}
 Assume \eqref{ex1} and
\begin{equation}\label{domain1}
\vec\nu(1-e^{-y})\sim y^{-\beta}L(y), ~~\\ y\to \infty,
\end{equation}
for some function $L$ slowly varying at $\infty$ and
$\beta\in(1,2)$. Then
\begin{equation}\label{nor2}
{\tau_n-{\tt m}^{-1}\log n\over {\tt m}^{-(\beta+1)/\beta}c_{\lfloor\log
n\rfloor}} \stackrel{d}{\to} \,{\cal S}(\beta),~~ \ \ n\to\infty,
\end{equation}
where $c_n$ is any sequence satisfying
$\lim_{n\to\infty}nL(c_n)/c_n^\beta=1$, and ${\cal S}(\beta)$ is
the $\beta$-stable distribution with characteristic function
      $$
z\mapsto \exp\{-|z|^\beta\Gamma(1-\beta)(\cos(\pi\beta/2)
      +i\sin(\pi\beta/2)\, {\rm sgn}(z))\},\quad z\in\mr.
      $$

To illustrate, consider
$$\nu(\diff x)={x^{a-2}\diff x\over (1-x)|\log(1-x)|^d}\,,$$ where
$d\in (2,3)$ and $a\in (d, d+1)$. Then \eqref{fin-nu} is
satisfied, and condition \eqref{RV} holds with $\gamma=d+1-a\in
(0,1)$ which implies \eqref{ex1}. Condition
\eqref{domain1} is fulfilled with $\beta=d-1\in (1,2)$. Therefore, the
absorption time $\tau_n$ of such coalescent
has limit law \eqref{nor2}.

\section{The number of collisions}

\subsection{Preliminaries}

As an approximation to the number of collisions $X_n$ we shall
consider $X_n^*$, the number of jumps of $N_n^*$ of size at least
two. We will not be able to derive a complete result comparable with  Theorems
\ref{main} or \ref{abstime} because the universal criterion for
convergence of  $X_n^\ast$ is not available. The cases when we
know the behaviour of $X_n^*$ (from \cite{GnePitYor1, GnePitYor2},
\cite{GBarbour} and \cite{GneIksMar}) are all covered by the
assumption that $\nu$ satisfies (\ref{RV}).  We shall also proceed
in this direction but exclude the case $\gamma=1$
when $K_{n,1}$ is the term of dominating growth in the sum $K_n=\sum_{r=1}^n K_{n,r}$. 
By Karamata's
Tauberian theorem \cite{BGT} condition \eqref{RV} with $\gamma<1$
is equivalent to the analogous asymptotics of the Laplace exponent
$$\Phi(z)\sim \Gamma(1-\gamma) z^\gamma \ell(z),~~~z\to\infty.$$
The case of finite $\nu$ appears when $\gamma=0$ and $\Phi$ is an increasing bounded function.


The sequence $(X_n)$ is nondecreasing and satisfies a
distributional recurrence
\begin{eqnarray}\label{recur1}
X_1=0, \ \ X_n\stackrel{d}{=} \tilde{X}_{n-J_n+1}+1(J_n\geq 2), \ \
n\geq 2,
\end{eqnarray}
where in the right-hand side $J_n$ is independent of the
$\tilde{X}_i$'s, $\tilde{X}_i\stackrel{d}{=} X_i$, and $J_n$ is
distributed like the first decrement of $N_n^*$, that is ${\mathbb
P}(J_n=k)=p_{n,n-k}$ for $1\leq k\leq n$. Similarly, the number
$X_n^*$ of collisions which involve at least two primary clusters
satisfies
\begin{eqnarray}\label{recur2}
X_1^\ast=0, \ \ X_n^*\stackrel{d}{=} \tilde{X}_{n-J_n}^*+1(J_n\geq
2), \ \ n\geq 2,
\end{eqnarray}
with the convention $X_0^\ast=0$. We may decompose
$X_n$ as
\begin{equation}\label{repr}
X_n=X_n^\ast+D_n\overset{\eqref{xxxx}}{=} K_n-K_{n,1}+D_n \ \
\text{a.s.}
\end{equation}
where $D_n$ is the number of collisions which take at most one primary cluster.
Thus a collision contributes to $D_n$ if either exactly one primary cluster merges with at least one secondary cluster,
or  at least two secondary and no primary clusters are merged.
\begin{lemma}\label{ma120}
We have
\begin{equation}\label{estimD}
{\mathbb E} D_n\leq c \sum_{k=1}^n \left({\Phi(k)/ k}\right)^2, \
\ n\in\mn.
\end{equation}
In particular, if either of two equivalent conditions
\begin{equation}\label{ex11} \int_0^1
x^{-2}\bigg(\int_0^x \vec{\nu}(y){\rm d}y\bigg)^2{\rm d}x<\infty,
\end{equation}
\begin{equation}\label{im11}
\sum_{k=1}^\infty \left({\Phi(k)/k}\right)^2<\infty
\end{equation}
holds then the sequence of distributions of the $D_n$'s is tight.
\end{lemma}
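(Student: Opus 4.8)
The plan is to imitate the treatment of $R_n$ in Lemma~\ref{ma}: bound $D_n$ from above by a count of ``cleanly consumed'' secondary clusters, reduce that count to a tagged--cluster computation inside a sub-coalescent, solve the resulting recurrence by means of Lemma~\ref{g_sum_est} (that is, Lemma~\ref{boundedness} of the Appendix), and then apply Lemma~\ref{g_sum_est} once more to read off the bound.

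\textbf{Reduction.} Any collision contributing to $D_n$ merges at most one primary cluster but at least two clusters in all, hence it absorbs at least one secondary cluster; assigning to it, say, the absorbed secondary cluster of smallest exponential mark defines an injection, so
$$D_n\ \le\ \widetilde D_n\ :=\ \#\bigl\{\,\text{secondary clusters }b:\ \text{when }b\text{ is merged into a larger cluster, at most one primary cluster takes part}\,\bigr\}.$$
Following the genealogical bookkeeping of Lemma~\ref{ma}, trace every secondary cluster back to its most recent ancestor that was \emph{born} in a collision or switch involving some primary clusters; such progenitors are produced precisely at the visits of $N_n^\ast$ to the states $j=n-1,\dots,0$, and $\mmp(N_n^\ast\text{ visits }j)=g_{n,j}$. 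Grouping secondary clusters according to their progenitors gives
$$\me\widetilde D_n\ =\ \sum_{j=0}^{n-1}g_{n,j}\,\rho_j,$$
where $\rho_j$ is the expected number of cleanly consumed secondary clusters whose progenitor is the one born when $j$ primary clusters coexist (with $\rho_0=O(1)$ handled separately).

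\textbf{The recurrence.} Condition on a secondary cluster $b$ being born at a time $t$ with $S_t=s$ and $j$ primary clusters present. By the strong Markov property of $S$ and the lack of memory of the exponential law, the $j$ surviving primary marks equal $s$ plus i.i.d.\ standard exponentials while $b$ receives an independent mark $u=s+\eps$; thus, as far as its interaction with the primaries is concerned, $b$ is a tagged cluster among $j+1$ clusters carrying i.i.d.\ standard exponential marks, and every secondary mark created later can only \emph{decrease} the number of primaries that a jump of $S$ sweeps together with $b$'s mark. Conditioning on which of the $j+1$ spacings above $s$ receives $u$ (each with probability $1/(j+1)$) and, at the next passage of $S$ through $u$, on whether $b$ is consumed together with at most one primary or instead merges (with secondary clusters only) into a new progenitor standing above $k\le j$ primaries, one is led --- exactly as in the derivation of the inequality for $q_j$ in Lemma~\ref{ma}, but carrying an extra additive ``clean consumption'' term --- to a recurrence of the schematic form
$$\rho_j\ \le\ \frac{c}{j+1}\sum_{k=0}^{j}g_{j+1,k}\bigl(\sigma_k+p_{k+1,k}\,\rho_k\bigr),\qquad 1\le j\le n-1,$$
where $\sigma_k\le c\,k\bigl(\Phi(k)-\Phi(k-1)\bigr)/\Phi(k)$ bounds the contribution of a clean consumption while $k$ primaries stand above $b$'s mark. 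Substituting $\varphi_{k,1}=k(\Phi(k)-\Phi(k-1))$, using concavity of $\Phi$ and monotonicity of $s\mapsto\Phi(s)/s$ as in Lemma~\ref{ma}, and applying Lemma~\ref{g_sum_est} (choosing the $r_k$ so that the factor $\Phi(k)/k$ cancels, exactly as done there for $q_j$), this collapses to a linear recurrence for $(j+1)\rho_j$ of the same type as the one solved in Lemma~\ref{ma}, whose solution satisfies $\rho_j\le c\,\Phi(j)/j$.

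\textbf{Conclusion.} Combining the displays of the Reduction step with $\rho_j\le c\Phi(j)/j$,
$$\me D_n\ \le\ \me\widetilde D_n\ \le\ c\sum_{j=1}^{n}g_{n,j}\,\frac{\Phi(j)}{j}.$$
Since $s\mapsto\Phi(s)/s$ is nonincreasing, so is $k\mapsto\Phi(k)\cdot\bigl(\Phi(k)/k\bigr)\big/k=\bigl(\Phi(k)/k\bigr)^2$, whence Lemma~\ref{g_sum_est} applied with $r_k=\Phi(k)/k$ gives $\me D_n=O\bigl(\sum_{k=1}^n(\Phi(k)/k)^2\bigr)$, which is \eqref{estimD}. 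The equivalence of \eqref{ex11} and \eqref{im11} follows, just as for \eqref{ex1}$\Leftrightarrow$\eqref{im1}, by repeated integration by parts starting from $\Phi(z)=z\int_0^1(1-x)^{z-1}\vec\nu(x)\,\diff x$, the boundary terms vanishing thanks to \eqref{fin-nu}. Finally, under \eqref{im11} the right-hand side of \eqref{estimD} stays bounded in $n$, so $\me D_n=O(1)$ and tightness of the laws of the $D_n$'s is immediate from Markov's inequality.

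\textbf{Main obstacle.} The crux is the middle step: justifying rigorously the passage to the tagged--cluster--in--$\Pi_{j+1}$ picture --- in particular the assertion that secondary marks created during the evolution never spoil the ``at most one primary'' event --- and, above all, arranging the genealogical bookkeeping so that each collision counted by $D_n$ is charged exactly once to a progenitor secondary cluster born with primary involvement. This is precisely the delicate feature, mentioned in the Introduction, that distinguishes $\Pi_n$ from $\Pi_n^\ast$. Pinning down the exact exponent in $\rho_j\le c\,\Phi(j)/j$ (rather than, say, $c/j$ or $c(\Phi(j)/j)^2$) is what makes the final series come out as $\sum_k(\Phi(k)/k)^2$.
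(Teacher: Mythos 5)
There is a genuine gap, and it sits exactly at the point you flag as the crux. Your per-state bound $\sigma_k\le c\,k(\Phi(k)-\Phi(k-1))/\Phi(k)$ cannot hold with a uniform constant: a ``clean consumption'' includes the event that the sweep through $b$'s mark takes \emph{exactly one} primary, whose probability from state $k+1$ is of order $\varphi_{k+1,2}/\Phi(k+1)$, and the ratio $\varphi_{k+1,2}/\varphi_{k+1,1}$ is unbounded in general. For instance, for $\nu=\delta_{x_0}$ (or any finite $\nu$, precisely the compound Poisson case in which Lemma \ref{ma120} is invoked for Theorem \ref{main}) one has
\begin{equation*}
\frac{\varphi_{k+1,2}}{\varphi_{k+1,1}}=\frac{\binom{k+1}{2}x_0^2(1-x_0)^{k-1}}{(k+1)x_0(1-x_0)^{k}}=\frac{k}{2}\,\frac{x_0}{1-x_0}\ \longrightarrow\ \infty ,
\end{equation*}
so a pointwise domination of the ``one extra primary'' term by the switch term $p_{k+1,k}$ fails. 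The paper avoids any pointwise comparison: it tags a \emph{primary} cluster, uses \eqref{recur1}, \eqref{recur2} and \eqref{repr} to get $d_n\le\sum_k p_{n,k}(d_k+y_{k+1})$ and hence $d_n\le\sum_j g_{n,j}y_{j+1}$, and then expresses $y_n=(\me K_{n,1}+2\me K_{n,2})/n$ by exchangeability; the size-two count is controlled only in the aggregate, via the occupancy comparison $\me K_{n,2}\le c\,\me K_{\lceil n/2\rceil,1}$ of Lemma \ref{kar} together with $\me K_{n,1}\le c\,\Phi(n)$ from Lemma \ref{g_sum_est} (inequality \eqref{121}). Some ingredient of this ``halve the number of balls'' type is indispensable in your scheme as well (e.g.\ to bound $\sum_k g_{j+1,k+1}\sigma_k$ by $c\,\Phi(j+1)$), and it is absent from your proposal.

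A second, related gap: the self-referential recurrence for $\rho_j$ is only sketched, and the assertion that it ``collapses'' to yield $\rho_j\le c\,\Phi(j)/j$ is not derived. Unlike the situation in Lemma \ref{ma}, where the boundedness of $(j+1)q_j$ was extracted from convergence of an auxiliary series under \eqref{im1}, here \eqref{estimD} is claimed for all $n\in\mn$ with no hypothesis on $\nu$ beyond \eqref{fin-nu}, so any bootstrap of the type you hint at must be carried out unconditionally; moreover the bookkeeping that charges each collision counted by $D_n$ to a unique progenitor (in particular through chains of secondary-only mergers) is exactly the delicate point and is not pinned down. The paper's tagged-primary argument sidesteps both issues, since $d_n$ satisfies a plain linear recursion with the explicit inhomogeneity $y_{k+1}$ and needs no per-progenitor quantity at all. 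Your outer steps (iterating against $g_{n,j}$, the final application of Lemma \ref{g_sum_est} with $r_k=\Phi(k)/k$, the equivalence \eqref{ex11}$\Leftrightarrow$\eqref{im11}, and tightness from boundedness of $\me D_n$) do match the paper, but the middle of the argument as written would fail.
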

\begin{proof}
The equivalence of \eqref{ex11} and \eqref{im11} follows from
\cite[Proposition 1.4]{Bert}.

Choose some primary cluster $b$, to be definite let it be the
cluster labelled 1, and suppose $\tilde{X}_{n-1}$ is realised as
the number of collisions among $n-1$ primary clusters
$[n]\setminus\{b\}$ and their followers. Then
${X}_n=\tilde{X}_{n-1}+Z_n$, where $Z_n$ is the indicator of the
event that the first collision of $b$ involves exactly one other
cluster $a$. At the time of the merge of $b$ with $a$ the Markov
chain $N_n^*$ decrements by two or one, depending on whether $a$
is primary or secondary. Let $Y_n$ be the indicator of the event
that the first involvement of $b$ is either turning $b$ into
secondary cluster, or a collision taking at most one other primary
cluster and arbitrary number of secondary clusters. Clearly,
$Y_n\geq Z_n$, therefore from \eqref{recur1}
\begin{equation}\label{ineq3}
X_n\stackrel{d}{\leq} \tilde{X}_{n-J_n}+Y_{n-J_n+1}+1(J_n\geq 2),
\end{equation}
where $\stackrel{d}{\leq}$ stands for `stochastically smaller'. Passing to
expectations in \eqref{ineq3}, \eqref{recur2} and \eqref{repr} we
see that, for $d_n:={\mathbb E}D_n$, $y_n:={\mathbb E}Y_n$,
$$d_1=0, \ \ d_n\leq \sum_{k=1}^{n-1} p_{n,k}(d_k+y_{k+1}), \ \ n=2,3,\ldots,$$
and iterating this inequality yields
$$d_1=0, \ \ d_n\leq\sum_{j=1}^{n-1}g_{n,j}y_{j+1}, \ \ n=2,3,\ldots$$
By exchangeability, we have $y_n=(\me K_{n,1}+2\me K_{n,2})/n$.
Since $$\me K_{n,1} =
\sum_{k=1}^n g_{n,k}p_{k,k-1}=
\sum_{k=1}^n
g_{n,k}{k(\Phi(k)-\Phi(k-1))\over \Phi(k)},$$ using Lemma
\ref{g_sum_est} with $r_k={k(\Phi(k)-\Phi(k-1))/\Phi(k)}$ yields
\begin{equation}\label{121}
\me K_{n,1}\leq c\,\Phi(n), \ \ n\in\mn.
\end{equation}
Using this, an inequality shown in Appendix and the monotonicity
of $\Phi$,
$$\me
K_{n,2}\overset{\eqref{122}}{\leq} c_1\, \me K_{\lceil
n/2\rceil,1}\overset{\eqref{121}}{\leq} c_2\, \Phi(\lceil
n/2\rceil)\leq c_2\,\Phi(n).$$ Thus
$$d_n\leq c\,\sum_{k=1}^n g_{n,k} \Phi(k)/k,$$
and using Lemma \ref{g_sum_est} with $r_k=\,c\Phi(k)/k$ results in
\eqref{estimD}.
\end{proof}

\subsection{The compound Poisson case}
Assume that $\nu$ is a finite measure on $(0,1)$, not supported by
a geometric sequence of the form $(1- x^k)_{k\geq 0}$, for some
$x\in (0,1)$. Since a linear time change of the coalescent does
not affect the distribution of $X_n$ we will not lose generality
by assuming that $\nu$ is a probability measure on $(0,1)$. Let
$(W_k)_{k\in\mn}$ be independent copies of a random variable $W$
such that the law of $1-W$ is $\nu$. The subordinator $S$ is then
a unit rate compound Poisson process with the generic jump $|\log
W|$ having some nonlattice law.

The variable $K_{n,r}$ introduced on
p.~\pageref{knr} can be identified with the number of parts of size $r$ in
the regenerative composition of $[n]$ associated with $S$.
Alternatively, $K_{n,r}$ has interpretation in terms of the following occupancy model (see e.g. \cite{GneIksMar}).
Consider a random discrete distribution
$$P_k:=W_1\ldots W_{k-1}(1-W_k), \ \ k\in\mn,$$
with $P_k$ thought of as a frequency of box $k$.  Suppose $n$ balls are thrown in infinitely many
boxes, independently given $(P_k)$, with probability $P_k$ of falling in box $k$  for each ball. Then $K_{n,r}$ can be identified with the number of boxes
occupied by exactly $k$ out of $n$ balls.

Introduce
$${\tt m}:=\int_0^1 |\log(1-x)|\nu({\rm d}x)$$
and for $1\leq r\leq n$ let $\varkappa_{n,r}:=\me K_{n,r}$.

\begin{assertion}\label{singl}
\vskip0.01cm
\noindent
\begin{itemize}
\item[\rm (a)] If  ${\tt m}<\infty$ then for every $r=1,2,\dots$ 
  the vector $(K_{n,1}, K_{n,2},\ldots, K_{n,r})$ converges weakly, as $n\to\infty$, 
to a proper multivariate distribution, and 
$\varkappa_{n,r}\to({\tt m}r)^{-1}.$
\item
[\rm (b)] If ${\tt m}=\infty$ then $\varkappa_{n,r}\to 0$, so $K_{n,r}\to 0$ in probability.
\end{itemize}
\end{assertion}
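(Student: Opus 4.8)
The plan is to exploit the occupancy/renewal representation of $K_{n,r}$ recorded just above the statement. Under the construction of Section~3 the closed range of $S$ is the renewal sequence $0=H_0<H_1<H_2<\cdots$ with increments $H_k-H_{k-1}=|\log W_k|$, the marks $\eps_1,\dots,\eps_n$ are i.i.d.\ standard exponential and independent of $(H_k)$, and $K_{n,r}$ is the number of gaps $[H_{k-1},H_k)$ that contain exactly $r$ of these marks; equivalently $K_{n,r}$ counts the boxes hit by exactly $r$ of $n$ balls in the occupancy scheme with frequencies $P_k=W_1\cdots W_{k-1}(1-W_k)=e^{-H_{k-1}}(1-W_k)$. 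Given the environment, a mark falls in gap $k$ with probability $P_k$, independently of the other marks, and $P_k$ depends on $W_1,\dots,W_k$ only through $e^{-H_{k-1}}$ and $W_k$, with $W_k$ independent of $H_{k-1}$. Averaging over $W_k$ and summing with the help of the renewal measure $U$ of $(H_k)$ (using $\me\sum_{k\ge1}f(H_{k-1})=\int f\,\diff U$) one obtains
$$\varkappa_{n,r}=\binom{n}{r}\,\me\sum_{k\ge1}P_k^{\,r}(1-P_k)^{n-r}=\binom{n}{r}\int_{[0,\infty)}\psi_n(h)\,U(\diff h),$$
where $\psi_n(h):=\me\big[(e^{-h}(1-W))^r(1-e^{-h}(1-W))^{n-r}\big]$ and $W$ is a generic factor.

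First I would settle the means. Substituting $h=\log n+s$ and using $\binom{n}{r}/n^r\le 1/r!$ together with $(1-a/n)^{n-r}\le e^{-a/2}$ for $n\ge 2r$, one gets the pointwise limit $\binom{n}{r}\psi_n(\log n+s)\to\phi(s):=\me\big[(e^{-s}(1-W))^re^{-e^{-s}(1-W)}\big]/r!$ and the $n$-free bound $\binom{n}{r}\psi_n(\log n+s)\le\bar\phi(s):=\me\big[(e^{-s}(1-W))^re^{-e^{-s}(1-W)/2}\big]/r!$. A short estimate, splitting according to whether $1-W$ is of order $e^{-|s|}$, shows that $\bar\phi$ (hence $\phi$) is directly Riemann integrable: it decays like $e^{-rs}$ at $+\infty$, while at $-\infty$ the factor $(1-W)^r$ makes $\sum_m\sup_{[-m-1,-m]}\bar\phi$ summable for {\it every} $\nu$, with no moment condition on $\nu$ near $0$. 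By the key renewal theorem --- here the nonlattice assumption on $\nu$ enters --- $U(\log n+\diff s)$ converges vaguely on $\mr$ to ${\tt m}^{-1}\diff s$, to be read as the null measure when ${\tt m}=\infty$; combined with the above domination this yields $\varkappa_{n,r}\to{\tt m}^{-1}\int_{-\infty}^{\infty}\phi(s)\,\diff s$. The remaining integral is evaluated by the substitution $u=e^{-s}(1-W)$, which removes the random factor before integration and leaves $\tfrac{1}{r!}\int_0^\infty u^{r-1}e^{-u}\,\diff u=\Gamma(r)/r!=1/r$. Thus $\varkappa_{n,r}\to({\tt m}r)^{-1}$ when ${\tt m}<\infty$ and $\varkappa_{n,r}\to 0$ when ${\tt m}=\infty$. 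In the latter case $K_{n,r}$ is a nonnegative integer whose mean vanishes, so Markov's inequality gives $K_{n,r}\to 0$ in probability; this is part~(b).

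For the weak convergence in part~(a) I would pass to a point-process limit near the largest mark. The point process $\sum_{j=1}^n\delta_{\eps_j-\log n}$ on $\mr$ converges in distribution to a Poisson process $\mathcal{P}$ with intensity $e^{-s}\,\diff s$ (its mean measure equals $e^{-s}\,\diff s$ for every $n$, and the $\eps_j$ are i.i.d.), while, independently, the renewal point process $\sum_{k\ge1}\delta_{H_k-\log n}$ converges on compact sets to a stationary two-sided renewal process $\mathcal{R}$ with generic spacing $|\log W|$; this is Blackwell's theorem, and it uses ${\tt m}<\infty$ and the nonlattice property. For each fixed $r$ the vector $(K_{n,1},\dots,K_{n,r})$ is the image of this pair under the map ``number of intervals of the renewal process that contain exactly $j$ points of the other process'', $1\le j\le r$. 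I would check that this map is a.s.\ continuous at $(\mathcal{R},\mathcal{P})$: a.s.\ no point of $\mathcal{P}$ coincides with a renewal epoch; for $s$ sufficiently negative the intensity $e^{-s}$ is so large that every interval of $\mathcal{R}$ below that level contains more than $r$ points, while for $s$ sufficiently positive every interval is empty; so only finitely many intervals can contain exactly $j$ points, and the count is locally determined. Truncating to intervals meeting $[-M,M]$ and letting $M\to\infty$ makes this precise and identifies the limit: $(K_{n,1},\dots,K_{n,r})$ converges weakly to $(N_1,\dots,N_r)$, where $N_j$ counts the intervals of a stationary $|\log W|$-renewal process that contain exactly $j$ points of an independent Poisson process with intensity $e^{-s}\,\diff s$. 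This vector is a.s.\ finite, so the limit law is proper. (Alternatively, part~(a) is contained in the available asymptotics for the number of blocks of a given size in a regenerative composition, cf.\ \cite{GneIksMar}.)

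The delicate part is the analytic bookkeeping around the key renewal theorem --- proving direct Riemann integrability of $\bar\phi$ with no assumption on the behaviour of $\nu$ near $0$, and interchanging the limit with the renewal integral for an $n$-dependent integrand --- and, for part~(a), the continuity and tightness of the counting functional at the point-process limit: one must rule out that the number of small blocks escapes to infinity or is lost in the limit, which again rests on ${\tt m}<\infty$ forcing the relevant band of renewal intervals to stay tight around level $\log n$. I expect this last point, the tightness and functional continuity in part~(a), to be the main obstacle.
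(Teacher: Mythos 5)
Your proposal is sound, but it follows a different route from the paper on the substantive point. The paper disposes of part (a) by citation: the weak convergence of $(K_{n,1},\dots,K_{n,r})$ and the limit $\varkappa_{n,r}\to({\tt m}r)^{-1}$ are taken from \cite{GIR}, Theorem 3.3. Its own work goes into part (b): there the authors bound $n P_j(1-P_j)^{n-1}$ by $e^zP_je^{-e^zP_j}$ (with $z=\log n$), so that $\varkappa_{n,1}$ is dominated by a genuine renewal convolution $\int_0^\infty f(z-y)\,U(\diff y)$ with an $n$-free function $f$, verify direct Riemann integrability of $f$ via the criterion ``$y\mapsto e^{-y}f(y)$ nonincreasing plus integrability'' (Durrett--Liggett), apply the key renewal theorem with ${\tt m}=\infty$, and then pass from $r=1$ to general $r$ by the appendix inequality $\varkappa_{2n,s}\leq c\,\varkappa_{n,1}$ (Lemma 6.2). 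Your argument for the means is the same renewal computation pushed further: you keep the $n$-dependent integrand $\binom{n}{r}\psi_n(\log n+s)$, dominate it by a dRi function $\bar\phi$ uniformly in $n$, and use Blackwell's theorem to get the exact limit $({\tt m}r)^{-1}$ for every $r$ at once; this buys you part (a)'s mean asymptotics and part (b) simultaneously, at the cost of the (standard but nontrivial) interchange of the vague limit $U(\log n+\diff s)\to{\tt m}^{-1}\diff s$ with a varying integrand, which you correctly flag, and it dispenses with the appendix comparison lemma. Your substitution $u=e^{-s}(1-W)$ giving $\int\phi=1/r$ with no condition on $\nu$ near $0$ is correct, and your dRi check for $\bar\phi$ goes through (e.g.\ via $e^{rs}\bar\phi(s)$ being monotone and $\bar\phi$ integrable). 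For the distributional statement in (a), where the paper only cites \cite{GIR}, you sketch a self-contained proof: joint convergence of the extremal point process of marks to a Poisson process with intensity $e^{-s}\diff s$ and of the renewal epochs around level $\log n$ to the stationary two-sided renewal process, followed by continuity of the block-counting functional with a truncation controlled by the same first-moment/dRi bounds. This is more work than the paper does, but it is a legitimate and essentially complete programme (it is close in spirit to how such results are proved in \cite{GIR}); the tightness issue you single out is indeed the only delicate step and is settled by the uniform bound $\sup_n\int_{|s|>M}\binom{n}{r}\psi_n(\log n+s)\,U(\log n+\diff s)\to 0$ as $M\to\infty$, which your domination already provides.
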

\begin{proof} Part (a) was proved in  \cite{GIR}, Theorem 3.3.

For (b) consider a random walk $(Q_j)_{j\geq 0}$  with $Q_0=0$ and
the generic step $|\log W|$. Then $P_j=(1-W_j)\exp(-Q_{j-1})$.
Using $1-x\leq e^{-x}$ with $x\in[0,1]$, and substituting $e^z$
for $n$, we reduce estimating $\varkappa_{n,1}=n\sum_{j\geq
1}P_j(1-P_j)^{n-1}$ to estimating

\begin{eqnarray*}
\me \left(\sum_{j\geq 1}  e^z P_je^{-e^z P_j}\right) &=&\me
\left(\sum_{j\geq
1}(1-W_j)\exp\{z-Q_{j-1}-e^{z-Q_{j-1}}(1-W_j)\}\right)\\&=&\int_0^\infty
f(z-y){\rm d}U(y),
\end{eqnarray*}
where $f(y):=\me \{(1-W)\exp(y-e^y(1-W))\}$ and $U(y):=\sum_{j\geq
0}\mmp\{Q_j\leq y\}$ is the renewal function of the random walk.
The function $f$ is nonnegative and integrable, since
$\int_{-\infty}^\infty f(y){\rm d}y=1$. Furthermore, the function
$y\to e^{-y}f(y)$ is nonincreasing. It is known
 that  these properties together
ensure that $f$ is directly Riemann integrable (see, for instance,
the proof of Corollary 2.17 in \cite{DurLiggett}). When ${\tt
m}=\me |\log W_j|=\infty$, application of the key renewal theorem
yields $\int_0^\infty f(z-y)U({\rm d}y)\to 0$, as $z\to\infty$,
whence $\varkappa_{n,1}\to 0$.

For $r>1$ the argument is similar, or one can use the estimate
$\varkappa_{n,r}\leq c_r \varkappa_{n,1}$ shown in Appendix, Lemma
\ref{kar}.
\end{proof}

The next theorem improves upon a result from
 \cite{GneIksMoe} by removing condition \eqref{log_moment}.
\begin{thm}\label{main}
For constants $a_n>0$ such that $\lin a_n=\infty$, and
$b_n\in\mr$, whenever
any of the variables
$${K_n-b_n\over a_n},~~~{X^\ast_n-b_n\over a_n}~~~ {\rm or~~} {X_n-b_n\over a_n}$$
converges weakly, as $n\to\infty$, to a nondegenerate proper distribution then all three
variables converge weakly to this distribution.
\end{thm}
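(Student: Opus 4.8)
The plan is to establish the theorem by showing that the three quantities differ from one another by terms that are stochastically bounded, hence negligible after dividing by $a_n\to\infty$. Concretely, I would organize the argument around the two decompositions already in hand: $X_n^\ast = K_n - K_{n,1}$ from \eqref{xxxx}, and $X_n = X_n^\ast + D_n$ from \eqref{repr}. So the task reduces to controlling the two ``correction terms'' $K_{n,1}$ and $D_n$. For $D_n$, Lemma \ref{ma120} already gives $\me D_n \le c\sum_{k=1}^n(\Phi(k)/k)^2$; since we are working under \eqref{RV} with $\gamma<1$, we have $\Phi(k)\sim \Gamma(1-\gamma)k^\gamma\ell(k)$, so $(\Phi(k)/k)^2$ is $k^{2\gamma-2}\ell(k)^2$ up to constants, and the series $\sum_k(\Phi(k)/k)^2$ converges precisely because $2\gamma-2<-1$ (using a Potter-bound / slowly-varying argument when $\gamma=1/2$ is not an issue here since the exponent is strictly below $-1$ for all $\gamma<1$). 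Hence $\me D_n = O(1)$, so $(D_n)$ is tight, and $D_n/a_n \tp 0$. For $K_{n,1}$, the estimate \eqref{121} gives $\me K_{n,1}\le c\,\Phi(n)$; but under \eqref{RV} with $\gamma<1$ we have $\Phi(n)\to\infty$ possibly, so this alone is not quite enough — I would instead argue directly that $K_{n,1}/a_n\tp 0$.

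For that last point I would split into cases according to the growth of $\Phi$. If $\gamma<1$ and $\Phi$ is unbounded, then $\Phi(n)=o(n^\gamma\cdot\text{slowly varying})$; since $K_n$ is the total number of blocks of the regenerative composition and $K_n \ge K_{n,1}$, and since whenever $(K_n-b_n)/a_n$ converges the scaling must have $a_n\to\infty$ with $a_n$ of the same order as the fluctuation scale of $K_n$ (which is at least $\Phi(n)^{1/2}$ in the relevant regimes), one expects $\Phi(n)=o(a_n)$, giving $K_{n,1}/a_n\le (\me K_{n,1}/a_n)\cdot(\text{Markov}) \tp 0$. This requires knowing that $a_n$ cannot be of smaller order than $\Phi(n)$; I would extract this from the known limit theorems for $K_n$ under \eqref{RV} (the cited results \cite{GnePitYor1, GnePitYor2, GBarbour, GneIksMar}), where the centering $b_n$ is comparable to $\me K_n$ which grows at least like $\Phi(n)$, and the scaling $a_n$ dominates $\Phi(n)$ except in the Poisson-type edge case. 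In the compound-Poisson case ($\gamma=0$, $\Phi$ bounded), I would instead invoke Proposition \ref{singl}: either ${\tt m}<\infty$, in which case $K_{n,1}$ converges in distribution to a proper limit and is therefore tight; or ${\tt m}=\infty$, in which case $K_{n,1}\tp 0$ outright. Either way $K_{n,1}/a_n\tp 0$.

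Once both $D_n/a_n\tp 0$ and $K_{n,1}/a_n\tp 0$ are established, the theorem follows by a Slutsky-type argument: from $X_n^\ast = K_n - K_{n,1}$ we get $(X_n^\ast-b_n)/a_n = (K_n-b_n)/a_n - K_{n,1}/a_n$, so convergence of either of $(K_n-b_n)/a_n$, $(X_n^\ast-b_n)/a_n$ implies convergence of the other to the same limit; and from $X_n = X_n^\ast + D_n$ we get $(X_n-b_n)/a_n = (X_n^\ast-b_n)/a_n + D_n/a_n$, linking $X_n$ to $X_n^\ast$ in the same way. Chaining these two equivalences gives the three-way equivalence claimed.

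The main obstacle I anticipate is the handling of $K_{n,1}$ in the unbounded-$\Phi$ case: I need a uniform lower bound on the admissible scaling $a_n$ to conclude $\Phi(n)=o(a_n)$, and this genuinely depends on importing the precise form of the limit theorems for regenerative composition structures from the literature rather than deriving it here. A cleaner route, if available, would be to show directly that $K_{n,1}$ is tight under \eqref{RV} with $\gamma<1$ — that is, that the number of singleton blocks stays stochastically bounded — which would make the argument self-contained and uniform across all cases; establishing that tightness (perhaps via a second-moment bound on $K_{n,1}$ refining \eqref{121}, or via the occupancy-model representation) is the step I would expect to cost the most work.
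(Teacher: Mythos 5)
In the setting where the theorem actually lives --- the standing assumption of this subsection is that $\nu$ is a finite (w.l.o.g.\ probability) measure, i.e.\ the compound Poisson case --- your argument is exactly the paper's proof: from \eqref{repr} and \eqref{xxxx} one only needs tightness of $D_n$ and of $K_{n,1}$; since $\Phi(k)<1$ the series \eqref{im11} converges trivially, so $D_n$ is tight by Lemma \ref{ma120}, while $K_{n,1}$ is tight by Proposition \ref{singl} (weak convergence to a proper limit if ${\tt m}<\infty$, convergence to $0$ in probability if ${\tt m}=\infty$); dividing by $a_n\to\infty$ and applying Slutsky gives the three-way equivalence. So the branch of your proposal that covers the actual hypotheses is correct and coincides with the paper's route.

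The part of your proposal aimed at the general regularly varying case, however, contains genuine errors, and it is worth seeing why the theorem is deliberately confined to finite $\nu$. First, the claim that $\sum_k(\Phi(k)/k)^2<\infty$ for all $\gamma<1$ is false: $2\gamma-2<-1$ holds only for $\gamma<1/2$, and indeed for $1/2<\gamma<1$ the paper shows the partial sums grow like $n^{2\gamma-1}\ell^2(n)$, so one can only conclude $D_n/a_n\tp 0$ by comparing with $a_n\asymp n^\gamma\ell(n)$, not tightness of $D_n$. Second, and more seriously, your hoped-for statement that $K_{n,1}/a_n\tp 0$ whenever $\Phi$ is unbounded under \eqref{RV} is not true for $0<\gamma<1$: there $K_{n,1}$ is of the same order as $K_n$ itself (the paper records that $K_{n,r}/a_n$ converges almost surely and in the mean to a nondegenerate limit), so $K_n$ and $X_n^\ast=K_n-K_{n,1}$ converge under a common normalization to \emph{different} (proportional) limits, and the equivalence asserted in the theorem simply fails in that regime. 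The bound \eqref{121}, $\me K_{n,1}\leq c\,\Phi(n)$, cannot rescue this since $\Phi(n)$ and $a_n$ have the same order when $0<\gamma<1$. This is precisely why the paper restricts Theorem \ref{main} to the compound Poisson case and treats the regimes of slow variation and of $0<\gamma<1$ separately, transferring limits from $K_n$ (respectively $X_n^\ast$) to $X_n$ by showing the corrections are $o(a_n)$ rather than by a scale-free equivalence.
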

\begin{proof}
Recall representation \eqref{repr}. Since $\nu$ is a probability
measure we have $\Phi(k)<1$, hence condition \eqref{im11} is
satisfied, and the
 sequence of laws of the $D_n$'s is tight by Lemma
\ref{ma120}. By Proposition \ref{singl}, the sequence of laws of
the $K_{n,1}$'s is tight as well. By the assumption
$a_n\to\infty$ the result follows.
\end{proof}

From \cite{GneIksMar} it is known that, depending on the behaviour
of $\vec{\nu}(x)$ near $x=1$
there are five different modes of the
weak convergence of, suitably normalized and centered, $K_n$. We
do not exhibit all these cases here, rather provide an example
borrowed from \cite{GneIksMar} to demonstrate a substantial role
of the parameter $\theta=\int_0^1 |\log x|\nu(\diff x)$.
\vskip0.3cm
\noindent
{\bf Example}. Suppose $\nu$ has the right tail of the form
\begin{equation*}\label{the}
\vec{\nu}(x)={|\log x|^\rho \over 1+|\log x|^\rho}, ~~~~ x\in
(0,1]
\end{equation*}
with $\rho>0$.
In the case $\rho\in (0,1/2)$ we have $\theta=\infty$, and
$${X_n-{\tt m}^{-1}\log n+({\tt m}(1-\rho))^{-1}\log^{1-\rho} n\over c \log^{1/2} n} \ \stackrel{d}{\to} \ {\cal N}(0,1), \ \
n\to\infty,$$ where ${\tt m}=\int_0^1 |\log(1-x)|\,\nu({\rm d}x)$.

In the other case,
when $\rho>1/2$ (then $\theta<\infty$ for $\rho>1$),
the centering simplifies, so that
$${X_n-{\tt m}^{-1}\log n\over c \log^{1/2} n} \ \stackrel{d}{\to}
{\cal N}(0,1), \ \ n\to\infty.$$

\vskip0.3cm
\noindent
\paragraph{Evolution of secondary particles.} In the compound Poisson case the number $V_t$ of secondary clusters of $\Pi_\infty(t)$
is finite, for each $t\geq 0$. The process $V=(V_t)_{t\geq 0}$
starts with $V_0=0$ and is a Markov chain with the transition rate
$\varphi_{m,k}={m\choose k} \lambda_{m,k}$ for jumping from $m$ to
$m-k+1$, $0\leq k\leq m$, $k\neq 1$. The rate for $k=0$ is given
by the same formula (\ref{lrates}), and $\varphi_{m,0}<\infty$
because $\nu$ is finite. The $k=0$ transition, resulting in
increase of the number of secondary clusters by one, occurs when
some (in fact, infinitely many) primary clusters
 merge without engagement of secondary
clusters. The stationarity of $V$ is a consequence of the existence of  the dust component with infinitely many clusters.

It can be shown that
the Markov chain $V$ is positively recurrent and has a unique stationary distribution $(\pi_m)$ found from
 the balance equation
\begin{equation}\label{balance}
\pi_m=\sum_{k=0}^\infty \pi_{m+k-1}\varphi_{m+k-1,k}\,
\end{equation}
supplemented by the conditions $\pi_0=0$ and $\sum_{m=1}^\infty \pi_m=1$.


Suppose for example that $\nu({\rm d}x)={\rm d}x$ is the Lebesgue measure on $[0,1]$. In this case $\varphi_{m,k}=(m+1)^{-1}$.
Equation (\ref{balance}) becomes
$\pi_m=\sum_{j=m-1}^\infty {\pi_{j}/( j+1)}.$
Differencing yields
$\pi_m-\pi_{m+1}={\pi_{m-1}/m},$
which is readily solved as
$$\pi_m={e^{-1}\over (m-1)!}\,,~~~~m=1,2,\dots$$
so in this case the stationary distribution  is shifted Poisson.

In contrast, the number of secondary clusters in the finite coalescent $\Pi_n$ is not a Markov process, because
the transition rates  depend on the number of remaining primary particles.


\subsection{The case of slow variation}
Suppose (\ref{RV}) holds with $\gamma=0$ and slowly varying $\ell(z)\to\infty, ~~z\to\infty$.
The Laplace exponent satisfies then $\Phi(z)\sim\ell(z)$.
Suppose also that the subordinator has finite moments
$${\tt m}=\me S_1=\int_0^1 |\log(1-x)|\nu(\diff x),~~~{\tt s}^2={\rm Var}\, S_1=\int_0^1 |\log(1-x)|^2\nu(\diff x).$$
Choose the centering/scaling constants as
$$b_n= {1\over{\tt m}}\int_0^n {\Phi(z)\over z}\diff z\,,~~~a_n=\sqrt{{{\tt s}^2\over{\tt m}^3}\int_0^n {\Phi^2(z)\over z}\diff z}.$$
In \cite{GBarbour} it was shown that for  $n\to\infty$
$$\me K_n\sim b_n, ~~~\sqrt{{\rm Var }K_n}\sim a_n,$$
and that the normal limit $(K_n-b_n)/a_n\stackrel{d}{\to}{\cal
N}(0,1)$ holds for various classes of functions $\ell$. In
particular, this includes functions of slow variation at infinity
with asymptotics as diverse as
$$\ell(z)=\log(\log( \dots(\log (z))\dots)), ~~~\ell(z)=\log^\beta z ~,~~~\ell(z)=\exp(\log^\beta z),$$
where $\beta>0$.

The series (\ref{im11}) converges for arbitrary $\ell$, hence
by Lemma \ref{ma120}
$\me D_n=O(1)$. On the other hand, from (\ref{121}) and by the properties of slowly varying functions \cite{BGT}
$$\me K_{n,1}=O(\Phi(n))=o(a_n).$$
It is immediate now that $(K_n-b_n)/a_n\stackrel{d}{\to}{\cal
N}(0,1)$ implies both $(X_n^*-b_n)/a_n\stackrel{d}{\to}{\cal
N}(0,1)$ and $(X_n-b_n)/a_n\stackrel{d}\to{\cal N}(0,1)$.
\vskip0.3cm \noindent {\bf Example: gamma subordinators.} Consider
the classical  gamma subordinator with Laplace exponent
$\Phi(z)=\alpha \log(1+z/\beta)$, where $\alpha,\beta>0$. The
corresponding $\nu$ driving the coalescent has density
$$\nu(\diff x)={\alpha(1-x)^{\beta-1}\over|\log(1-x)|}\,\diff x.$$
The central limit theorem for $K_n$ was proved by different methods in \cite{GnePitYor1} and \cite{GBarbour}.
From this we conclude that
 the number of collisions also satisfies $(X_n-b_n)/a_n\stackrel{d}\to{\cal N}(0,1),$ where the constants can be chosen as
$$a_n=\sqrt{{\beta\log^3 n\over 3}}~  ,~~~~b_n={\beta\log^2 n \over 2}.$$
\vskip0.3cm
\noindent
{\bf Example:
beta$(2,b)$-coalescents.}
For this family
$\nu(\diff x)=x^{-1}(1-x)^{b-1}\diff x$. The convergence of $X_n$ to the standard normal distribution
holds with scaling/centering constants
$$a_n=\sqrt{\frac{{\tt s}^2}{3{\tt m}^3}\log^3 n},~~~b_n={\log^2 n\over 2{\tt m}},$$
where ${\tt m}=\zeta(2,b),~{\tt s}^2=2\zeta(3,b)$.

\subsection{Regular variation with index $0<\gamma<1$.}
A key distribution in this case is the law of the random variable
$$I=\int_0^\infty \exp(-\gamma S_t)\diff t,$$
known as the exponential functional of the subordinator $\gamma S$.
The distribution of $I$ is uniquely determined by the moments
$$\me I^k={k!\over\prod_{i=1}^k\Phi(\gamma i)}.$$
From \cite{GnePitYor2} (Theorem 4.1 and Corollary 5.2) $X_n^*/a_n\stackrel{d}{\to} I$,
where $a_n=\Gamma(2-\gamma)n^\gamma\ell(n)$, and no centering is required.
In fact, $K_n/a_n$ and $K_{n,r}/a_n$ ($r\geq 1$) converge almost surely and in the mean.

To justify the convergence of $X_n$ using (\ref{repr}) we need to etimate $\me D_n$.
For $0<\gamma<1/2$ we have $\me D_n=O(1)$ since $\Phi(z)\sim c\,\ell(z)z^\gamma$, hence  the series \eqref{im11} converges.
For $1/2<\gamma<1$ we have
$$\sum_{k=1}^n\left( \Phi(k)/k\right)^2\sim c\, n^{2\gamma-1}\ell^2(n),$$
and for $\gamma=1/2$ 
the latter sum, as  a function of $n$,
has the property of
slow variation at infinity (see \cite{BGT}, Proposition 1.5.8).
Thus in any case $D_n/a_n\to 0$ in probability. It follows that
$X_n/a_n\stackrel{d}{\to}I$.

\vskip0.3cm
\noindent
{\bf Example:
beta$(a,b)$-coalescents with $1<a<2$.}
In this case
 $$ {X_n \over n^{2-a}}\ \stackrel{d}{\to} \ {\Gamma(a+b)\over (2-a)\Gamma(b)} \int_0^\infty
   \exp\{{{-(2-a)S_t}\}}\,\diff t, \ \ n\to\infty.
$$
This result was obtained in \cite{haas} by  another
method, and
with a change of variables the equivalence
with Theorem 7.1 from \cite{IksMoe2} in the case $b=1$ can be established.

The subfamily of beta-coalescents with parameters $b=2-a$ was intensively studied. In the literature sometimes $\alpha:=2-a$ is taken as parameter, so that $\nu$ in this notation 
becomes
$$\nu(\diff x)=x^{-\alpha-1}(1-x)^{\alpha-1}.$$
In this case $N_n^*$ decrements like a random walk conditioned to hit $0$ and, moreover, there is an explicit formula (see \cite{GnePit} p. 471)
$$g_{n,k}={(\alpha)_k(\alpha)_{n-k}\over(\alpha)_n}{n\choose k},$$
where $(\alpha)_k$ denotes the rising factorial.
The variable $K_n$ is then the number of blocks in Pitman's $(\alpha,\alpha)$-partition
(or in the regenerative composition induced by excursions of a Bessel bridge \cite{GnePit}).
We refer to \cite{CSP} and \cite{BerestyckiLec} for further multiple connections  of these beta-coalescents
to various random processes.

\section{Appendix}

\paragraph{A linear recursion.}
For each $n\in {\mathbb N}$, let $(p_{n,k})_{0\leq k\leq n}$ be a
probability distribution with $p_{n,n}<1$. Define a sequence
$(a_n)_{n\in\mn}$ as a (unique) solution to the recursion
\begin{equation}\label{recursion}
a_n=r_n+\sum_{k=0}^n p_{n,k}a_k,~~~~n\in \mn,
\end{equation}
with given  $r_n\geq 0$ and the initial value  $a_0=a\geq 0$.
\begin{lemma}
\label{boundedness} Suppose there exists a sequence
$(\psi_n)_{n\in \mn}$ such that
\begin{itemize}
\item[\rm(C1)]
$\liminf_{n\to\infty}\psi_n\sum_{k=0}^{n}(1-k/n)p_{n,k}>0$,
\item[\rm(C2)] the sequence $(\psi_k r_k/k)_{k\in\mn}$ is
non-increasing.
\end{itemize}
Then  $(a_n)$ defined by \eqref{recursion} satisfies
\begin{equation}\label{bounded}
a_n=O\Big(\sum_{k=1}^{n}\frac{r_k\psi_k}{k}\Big),\;\;n\to\infty.
\end{equation}
In particular, $(a_n)$ is bounded if
the series $\sum_{k=1}^{\infty}\frac{r_k\psi_k}{k}$ converges.
\end{lemma}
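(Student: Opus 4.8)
The plan is to argue by a maximal-ratio (extremal) device, in the same spirit as the proof of Lemma~\ref{ma}; this avoids any use of the hitting probabilities and takes care of the finitely many initial indices automatically.

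\emph{Reduction and set-up.} Since the solution of \eqref{recursion} depends linearly on the data $(a_0,(r_n))$, and since for $r_n\equiv0$, $a_0=1$ the solution is identically $1$ (induction on $n$, using $\sum_{k=0}^np_{n,k}=1$ and $p_{n,n}<1$), one may write $a_n=a_0+\tilde a_n$, where $\tilde a_n$ solves \eqref{recursion} for the same $(r_n)$ but with initial value $0$; it therefore suffices to bound $\tilde a_n$, so from now on I assume $a_0=0$, whence $a_n\geq0$ for all $n$. Put $\rho_k:=\psi_kr_k/k$ and $B_n:=\sum_{k=1}^n\rho_k$ (with $B_0:=0$). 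By (C2) the sequence $(\rho_k)$ is non-increasing; since $r_k\geq0$ and, by (C1), $\psi_k>0$ for all large $k$ (note $\mu_k:=\sum_{j=0}^k(1-j/k)p_{k,j}>0$ because $p_{k,k}<1$), it follows that $\rho_k\geq0$ and $B_n$ is non-decreasing. Finally I would rewrite \eqref{recursion} as $a_n=\widehat r_n+\sum_{k=0}^{n-1}\widehat p_{n,k}a_k$ with $\widehat r_n:=r_n/(1-p_{n,n})\geq0$ and $\widehat p_{n,k}:=p_{n,k}/(1-p_{n,n})$ a probability vector on $\{0,\dots,n-1\}$.

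\emph{The one-step estimate.} Since $(\rho_j)$ is non-increasing, $B_n-B_k=\sum_{j=k+1}^n\rho_j\geq(n-k)\rho_n$ for $0\leq k\leq n$, and since (the $k=n$ term being zero) $\sum_{k=0}^{n-1}\widehat p_{n,k}(n-k)=n\mu_n/(1-p_{n,n})$, one gets
\[
\sum_{k=0}^{n-1}\widehat p_{n,k}B_k=B_n-\sum_{k=0}^{n-1}\widehat p_{n,k}(B_n-B_k)\leq B_n-\rho_n\,\frac{n\mu_n}{1-p_{n,n}}.
\]
Let $M_n:=\max\{a_k/B_k:1\leq k\leq n,\ B_k>0\}$, a non-decreasing sequence; before the first index with $B_k>0$ all the $a_k$ vanish, so there is nothing to prove there. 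Using $a_k\leq M_{n-1}B_k$ for $k\leq n-1$ together with the last display and the identity $n\rho_n=\psi_nr_n$,
\[
a_n\;\leq\;\widehat r_n+M_{n-1}\Bigl(B_n-\rho_n\frac{n\mu_n}{1-p_{n,n}}\Bigr)\;=\;M_{n-1}B_n+\frac{r_n}{1-p_{n,n}}\bigl(1-M_{n-1}\psi_n\mu_n\bigr).
\]
By (C1) there are $\delta>0$ and $N\in\mn$ with $\psi_n\mu_n\geq\delta$ for $n\geq N$; hence, for $n\geq N$, if $M_{n-1}\geq1/\delta$ the bracket is nonpositive, so $a_n\leq M_{n-1}B_n$ and therefore $M_n=M_{n-1}$.

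\emph{Conclusion.} If \eqref{bounded} failed, then $(M_n)$ would be unbounded, hence $M_n\to\infty$, and there would be $N'\geq N$ with $M_{N'-1}\geq1/\delta$; the previous step then forces $M_n=M_{n-1}$ for all $n\geq N'$, so $(M_n)$ is eventually constant — a contradiction. Thus $(M_n)$ is bounded, i.e.\ $a_n\leq CB_n$ for some finite $C$ and all $n$, which is \eqref{bounded}; the ``in particular'' statement follows because $B_n$ converges when $\sum_k\rho_k<\infty$. I expect the only real difficulty to be the one-step estimate — specifically, noticing that one should bound $B_n-B_k$ (rather than $B_{n-1}-B_k$) below by $(n-k)\rho_n$ and pair it with the mean decrement $\sum_k\widehat p_{n,k}(n-k)$, so that the factor $n$ and the whole sequence $(\psi_n)$ cancel, leaving exactly the quantity $\psi_n\mu_n$ that (C1) controls; the degenerate cases in which some $r_k$ or $\psi_k$ vanish are routine.
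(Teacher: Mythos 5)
Your proof is correct, and it differs from the paper's argument in how the comparison is closed, though the key estimate is the same in substance. The inequality you isolate --- $\sum_{k=0}^{n-1}\widehat p_{n,k}B_k\le B_n-\rho_n\, n\mu_n/(1-p_{n,n})$, obtained from the monotonicity of $\rho_k=\psi_k r_k/k$ and the mean decrement controlled by (C1) --- is an equivalent form of the summation-by-parts bound the paper uses to verify that $x_n:=c\sum_{k=1}^{n}r_k\psi_k/k$ is a supersolution of \eqref{recursion} for $n\ge n_0$; the paper then sets $y_n:=x_n+c_0-a_n$ with $c_0\ge\max_{n\le n_0}a_n$ and concludes $y_n\ge 0$ by induction, whereas you run a maximal-ratio argument, showing that $M_n=\max_{k\le n,\,B_k>0}a_k/B_k$ freezes once it exceeds $1/\delta$ --- the stabilization device the paper employs in the proof of Lemma~\ref{ma}, but not in its proof of this lemma. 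Your route dispenses with constructing an explicit supersolution and choosing the constant $c_0$, and it absorbs the finitely many initial indices automatically through the maximum; the paper's route yields an explicit constant in the $O$-bound (essentially the $c$ from \eqref{fromC1} plus $c_0$) and closes with a one-line induction. Two shared and harmless caveats: re-adding $a_0$ after your reduction gives $a_n\le a_0+CB_n$, which is the stated $O(B_n)$ only when some $\rho_k>0$ (the paper's bound $a_n\le x_n+c_0$ has exactly the same feature), and both arguments implicitly take $\psi_k>0$ for every $k$, which your step ``all $a_k$ vanish before the first index with $B_k>0$'' uses and which is automatic in the application $\psi_n=\Phi(n)$.
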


\begin{proof} Write for simplicity $p_{k}$ for $p_{n,k}$ and let $\pi_k=\sum_{j=0}^k p_j$.
Using (C2) we have
$$\sum_{k=1}^n{r_k\psi_k\over k}\pi_{k-1}\geq {r_n\psi_n\over n}\sum_{k=1}^n\pi_{k-1}=r_n\psi_n\sum_{j=0}^{n-1}(1-j/n)p_{j}.$$
By (C1) there exist $n_0\in\mn$ and $c>0$ such that
\begin{equation}\label{fromC1}
c\sum_{k=1}^n {r_k\psi_k\over k}\pi_{k-1}\geq r_n\,,~~~~n\geq n_0.
\end{equation}
From this, $x_n:=c\sum_{k=1}^n r_k\psi_k/k$ satisfies
\begin{equation}\label{recursion1}
x_n\geq r_n+\sum_{k=1}^n x_kp_k\,,~~~n\geq n_0
\end{equation}
To check the latter, write
\begin{eqnarray*}
r_n+\sum_{k=1}^n x_kp_k&=&r_n+c\sum_{j=1}^n\sum_{k=j}^n {r_j\psi_j\over j}p_k\\
&=& r_n+c\sum_{j=1}^n {r_j\psi_j\over j}(1-\pi_{j-1})\\
&=&
r_n+c\sum_{j=1}^n {r_j\psi_j\over j}-c \sum_{j=1}^n {r_j\psi_j\over j}\pi_{j-1}\\
& = & x_n+r_n-c \sum_{j=1}^n {r_j\psi_j\over
j}\pi_{j-1}\overset{\eqref{fromC1}}{\leq }x_n.
\end{eqnarray*}
Set $x_0:=0$. Subtracting \eqref{recursion} from
\eqref{recursion1} we see that $y_n:=x_n+c_0-a_n$ satisfies
$y_n\geq \sum_{k=0}^n p_k y_k$ for $n\geq n_0$ and arbitrary
$c_0$.
We can achieve that the recursion for $y_n$ holds for all
$n\in\mn$ by choosing $c_0\geq \max_{n\leq n_0}a_n$. But then it
is easily shown by induction that $y_n\geq 0$ for all $n\in\mn$,
which implies the desired estimate of $a_n$.
\end{proof}

\vskip0.3cm
\noindent
\paragraph{Estimates for the occupancy counts.}
Let $(p_k)_{k\in\mn}$ be a probability mass function.
Consider the multinomial occupancy scheme in which $n$ balls are thrown independently in boxes, with probability $p_j$ for box $1,2,\dots$
The expected number of boxes occupied by exactly $r$ out of $n$ balls is
$$\varkappa_{n,r}={n\choose r} \sum_{j\geq 1}
p_j^r(1-p_j)^{n-r}, \ \ 1\leq r\leq n.$$

\begin{lemma}\label{kar}
For fixed $r<s$ there exists a constant $c$ such that
\begin{equation}\label{122}
\varkappa_{n,r}\geq c\, \varkappa_{2n,s}, \ \ n\in\mn.
\end{equation}
\end{lemma}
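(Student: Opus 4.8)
The plan is to reduce the claim to a per-box inequality and then sum over the boxes. Writing $\varkappa_{n,r}=\binom{n}{r}\sum_{j\ge1}p_j^{\,r}(1-p_j)^{n-r}$ and similarly for $\varkappa_{2n,s}$, it suffices to find $c>0$ and $n_0\in\mn$ with
$$\binom{n}{r}p^{\,r}(1-p)^{n-r}\ \ge\ c\,\binom{2n}{s}p^{\,s}(1-p)^{2n-s},\qquad p\in(0,1),\ \ n\ge n_0;$$
summing over the boxes with $p=p_j$ then gives $\varkappa_{n,r}\ge c\,\varkappa_{2n,s}$ for $n\ge n_0$. Since $\varkappa_{2n,s}\le 2n/s<\infty$, the finitely many remaining values of $n$ impose only finitely many finite constraints on $c$ and are harmless (in the one place the lemma is used, $r=1$ and $s=2$, one has $\varkappa_{n,1}>0$ for every $n$, so no ratio degenerates).

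To prove the per-box inequality, divide both sides by the positive number $\binom{n}{r}p^{\,r}(1-p)^{n-r}$, rewriting it as
$$\frac{\binom{2n}{s}}{\binom{n}{r}}\;p^{\,s-r}(1-p)^{\,n-(s-r)}\ \le\ \frac1c .$$
For $n\ge 2(s-r)$ we have $(1-p)^{\,n-(s-r)}\le e^{-np/2}$, and the elementary maximum $\max_{p>0}p^{\,a}e^{-\lambda p}=(a/(e\lambda))^{a}$, applied with $a=s-r>0$ and $\lambda=n/2$, bounds $p^{\,s-r}(1-p)^{\,n-(s-r)}$ by a constant multiple of $n^{-(s-r)}$. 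On the other hand, for $n\ge 2r$ every factor of $\binom{n}{r}=n(n-1)\cdots(n-r+1)/r!$ is $\ge n/2$, so $\binom{n}{r}\ge (n/2)^{r}/r!$, while $\binom{2n}{s}\le(2n)^{s}/s!$; hence $\binom{2n}{s}/\binom{n}{r}=O(n^{\,s-r})$. Multiplying the two estimates, the powers $n^{s-r}$ and $n^{-(s-r)}$ cancel and the left-hand side above is bounded by an absolute constant, which yields the per-box inequality with an explicit $c>0$ for all sufficiently large $n$.

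The one point requiring care is that this bound be uniform in $n$: the polynomial growth $\binom{2n}{s}/\binom{n}{r}\asymp n^{s-r}$ must be exactly offset by the decay $\sup_{p}p^{\,s-r}(1-p)^{\,n-(s-r)}\asymp n^{-(s-r)}$ of the Poisson-type maximum, and it is. Equivalently, one may locate the minimizer of the per-box ratio, which sits at $p^{\ast}\sim(s-r)/n$, and check directly that the ratio's value there converges to a strictly positive limit as $n\to\infty$; probabilistically this says that a box carrying roughly $s$ of $2n$ balls carries roughly $s/2$ of the first $n$ of them, each being an event whose probability stays bounded away from zero. With these ingredients the remaining details are routine.
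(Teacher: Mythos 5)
Your proof is correct and follows essentially the same route as the paper's: a per-box comparison of $\binom{n}{r}p^r(1-p)^{n-r}$ with $\binom{2n}{s}p^s(1-p)^{2n-s}$, using $1-p\le e^{-p}$ together with elementary binomial-coefficient bounds, and then optimizing $p^{s-r}e^{-np/2}$ so that the $n^{s-r}$ growth of the coefficient ratio is exactly cancelled. Your separate treatment of the finitely many small $n$ (and the remark that in the application $r=1$, $s=2$ with $\varkappa_{n,1}>0$) is a minor bookkeeping addition, not a different argument.
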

\begin{proof}
Using $(1-x)^{-1}\geq e^x$ for
 $x\in (0,1)$,
\begin{eqnarray*}
{n \choose r}x^r(1-x)^{n-r}\over{2n\choose
s}x^s(1-x)^{2n-s}&\geq& c_1 {s!\over 2^s r!}(nx)^{r-s}(1-x)^{s-r-n}\\
&{\geq}& c_2
(nx)^{r-s} e^{nx/2}\\
&\geq& c_2
\min_{y>0} y^{r-s}e^{y/2}\\
&=& c_2
\left({e\over 2(s-r)}\right)^{s-r}.
\end{eqnarray*}
\end{proof}

The result extends immediately to the case of random $(P_k)$.
This generalization was used in the proof of Proposition
\ref{singl} with the $P_j$'s being the sizes of intervals obtained
by splitting $[0,1]$ at points of the range of the process
$\exp(-S)$.

\vskip0.4cm
\noindent {\bf Acknowledgement} A. Iksanov gratefully
acknowledges the support by a grant from the Utrecht University.

\end{document}